\newcounter{bullet}
\newenvironment{subproof}[1][\proofname]{
  
  \begin{proof}[#1]
}{
  \end{proof}
}
\newtheorem{thm}{Theorem}[section]
\newtheorem{prop}[thm]{Proposition}
\newtheorem{cor}[thm]{Corollary}
\newtheorem{lem}[thm]{Lemma}
\newtheorem{conj}[thm]{Conjecture}
\theoremstyle{definition}
\newtheorem{mydef}[thm]{Definition}
\newtheorem{claim}[thm]{Claim}
\newtheorem{obs}[thm]{Observation}
\crefname{lem}{lemma}{lemmas}
\newcommand{\gO}{\Omega}
\newcommand{\go}{\omega}
\newcommand{\EE}{\mathbb{E}}
\newcommand{\E}{\mathbb{E}}
\newcommand{\cH}{\mathcal{H} }
\newcommand{\h}{\mathcal{H} }
\newcommand{\cR}{\mathcal{R} }
\newcommand{\R}{\mathcal{R} }
\newcommand{\beq}[1]{\begin{equation}\label{#1}}
\newcommand{\enq}[0]{\end{equation}}
\newcommand{\eps}{\varepsilon}
\newcommand{\ga}[0]{\alpha }
\newcommand{\gb}[0]{\beta }
\newcommand{\gc}[0]{\gamma }
\newcommand{\gd}[0]{\delta }
\newcommand{\gD}[0]{\Delta }
\newcommand{\gS}[0]{\Sigma}
\newcommand{\mn}[0]{\medskip\noindent}
\newcommand{\nin}[0]{\noindent}
\newcommand{\sub}[0]{\subseteq}
\newcommand{\sm}[0]{\setminus}
\newcommand{\pr}[0]{\mathbb{P}}
\newcommand{\Aut}[0]{\mbox{Aut}}
\newcommand{\aut}[0]{\mbox{aut}}
\renewcommand{\dots}{,\ldots ,}
\newcommand{\pE}[0]{p_{\mathbb E}}
\newcommand{\C}[2]{\binom{{#1}}{{#2}}}
\newcommand{\ud}{{\underline{d}}}
\newcommand{\ub}{{\underline{b}}}
\newcommand{\dd}{\Delta}
\newcommand{\tdN}{\tilde{N}}
\newcommand{\tdX}{\tilde{X}}
\newcommand{\xypl}[0]{\mbox{$(x,y)$-$P_\ell$}}
\newcommand{\gdd}[0]{\delta}
\newcommand{\dhl}[0]{\tilde{\ell}}
\begin{document}

\title{On the "Second" Kahn--Kalai Conjecture: cliques, cycles, and trees}

\author[Q. Dubroff]{Quentin Dubroff}
\address{Department of Mathematics, Carnegie Mellon University}
\email{qdubroff@andrew.cmu.edu}

\author[J. Kahn]{Jeff Kahn}
\address{Department of Mathematics, Rutgers University}
\email{jkahn@math.rutgers.edu}

\author[J. Park]{Jinyoung Park}
\address{Department of Mathematics, Courant Institute of Mathematical Sciences, New York University}
\email{jinyoungpark@nyu.edu}

\begin{abstract}
We prove a few simple cases of 
a random graph statement that 
would imply the 
"second" Kahn--Kalai Conjecture. 
Even these cases turn out to be reasonably
challenging, and it is hoped that the ideas
introduced here may lead to further interest
in, and further progress on, 
this natural problem. 
\end{abstract}

\maketitle

\section{Introduction}\label{sec.intro}

For graphs $G$ and $J$, a \emph{copy} of $J$ in $G$ is an (unlabeled)
subgraph of $G$ isomorphic to $J$. (We will, a little abusively, use ``$G\supseteq H$"
to mean $G$ contains a copy of $H$.)
We use $N(G,J)$ for the number of such copies, and
$\EE_pX_J$ for $\EE N(G_{n,p},J)$ (where 
$G_{n,p}$ is the usual 
"Erd\H{o}s-R\'enyi" random graph). 
See the end of this section for 
other definitions and notation.

For $q \in [0,1]$, say a graph $J$ is \textit{$q$-sparse} if
\[
\EE_qX_{I} \ge 1 \,\,\,\forall I\sub J.
\]

We are interested here in the following
conjecture from \cite{dubroff2025second}.
\begin{conj}
\cite[Conj.~1.7]{dubroff2025second}
\label{conj:main}
There is a fixed K such that
if $H$ is $q$-sparse and $p=Kq$, then
\[
N(H,F) < \EE_pX_F \quad \forall F \sub H.
\]  
\end{conj}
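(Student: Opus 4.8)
Since Conjecture~\ref{conj:main} is open in general, a realistic target is the three families named in the title, which is what I would aim to establish; after the same two reductions in every case, cliques and cycles come out fairly cleanly and trees carry the real content.

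\textbf{Reduction.} Write $v_J,e_J$ for the numbers of vertices and edges of $J$, so that $N(K_n,J)=n!/((n-v_J)!\,\aut(J))$ and $\EE_pX_F=N(K_n,F)\,p^{e_F}$ is a monomial in $p$, hence increasing. Put $q_0=q_0(H):=\max\{N(K_n,I)^{-1/e_I}:I\sub H,\ e_I\ge1\}$: this is the least $q$ for which $H$ is $q$-sparse, so a $q$-sparse $H$ has $q\ge q_0$ and $\EE_{Kq}X_F\ge\EE_{Kq_0}X_F$, and it suffices to prove, for $q=q_0$,
\[
N(H,F)\ <\ K^{e_F}\,N(K_n,F)\,q_0^{e_F}\qquad(\forall\,F\sub H).
\]
The only property of $q_0$ used is $\EE_{q_0}X_I=N(K_n,I)\,q_0^{e_I}\ge1$ for all $I\sub H$, with equality at the extremal $I$. (We discard the degenerate case $H=K_n$ with $F$ edgeless, where the displayed inequality is an equality.)

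\textbf{Cliques and cycles.} For $H=K_r$, $s\mapsto{\binom ns}^{-1/\binom s2}$ is increasing, so $q_0={\binom nr}^{-1/\binom r2}$; since $N(K_r,F)=\big(\prod_{i<v_F}\tfrac{r-i}{n-i}\big)N(K_n,F)\le(r/n)^{v_F}N(K_n,F)$, the target reduces to $(r/n)^{v_F}{\binom nr}^{e_F/\binom r2}<K^{e_F}$; using $e_F\le\binom{v_F}2$ (whence $2e_F/(r-1)\le v_F$) and $\binom nr\le(en/r)^r$, the left side is $\le e^{2e_F/(r-1)}\le e^{e_F}$ for $r\ge3$, so any $K>e$ works (and $r\le 2$ is direct). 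For $H=C_m$ the densest subgraph is $C_m$, so $q_0=N(K_n,C_m)^{-1/m}$ and $nq_0=\big(2m\prod_{i<m}\tfrac n{n-i}\big)^{1/m}\ge1$; a copy of a linear forest $F\sub C_m$ is placed component-by-component, each in $\le 2m$ ways, so $N(C_m,F)\le(2m)^{c_F}$ with $c_F:=v_F-e_F$, while $\EE_{q_0}X_F$ is, up to an $F$-dependent constant, $n^{c_F}(nq_0)^{e_F}$; if $m=o(n)$ then $(2m)^{c_F}=o(n^{c_F})$, if $m=\Omega(n)$ then $nq_0\ge c>1$, and the lone case $F=C_m$ only asks $1<K^m$ --- so a fixed $K$ suffices.

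\textbf{Trees, the plan.} Let $H=T$ be a tree. Every subgraph is a forest, but now $q_0$ is shape-sensitive: over subtrees $I$, $N(K_n,I)^{-1/e_I}\asymp(\aut(I))^{1/e_I}n^{-v_I/e_I}$ trades ``$I$ large'' against ``$I$ a big-automorphism sub-star'', so $q_0$ ranges over $[\Theta(1/n),\Theta(1)]$. With $\Delta=\Delta(T)$, bound $N(T,F)$ by counting vertex-by-vertex embeddings, optimised over the vertex order (a root costs $\le v_T$, a later vertex with one already-placed neighbour costs $\le\Delta$, one with two costs $\le1$ since in a tree two vertices have at most one common neighbour), and bound $N(K_n,F)\ge(n-v_F)^{v_F}/\aut(F)$; dividing gives, among others,
\[
\frac{N(T,F)}{\EE_{q_0}X_F}\ \le\ \Big(\tfrac{v_T}{\,n-v_F}\Big)^{c_F}\Big(\tfrac{\Delta}{(n-v_F)q_0}\Big)^{e_F}
\]
for $v_F\le v_T\le n/2$ (near-spanning $F$ via Stirling). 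When $\Delta$ is large this closes: $S_{\Delta+1}\sub T$ forces $q_0\ge N(K_n,S_{\Delta+1})^{-1/\Delta}\ge(\Delta/e)\,n^{-(\Delta+1)/\Delta}$, so $(n-v_F)q_0\gtrsim\Delta\,n^{-1/\Delta}$ and the second factor is $\le(2e\,n^{1/\Delta})^{e_F}<K^{e_F}$ once $\Delta\ge C\log n$ for suitable absolute $C,K$, the first factor being $\le1$.

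\textbf{The main obstacle.} The residual regime $\Delta(T)=O(\log n)$ --- $T$ ``thin'' --- is where I expect essentially all the difficulty: there the vertex-by-vertex bound is far too wasteful (already $N(P_3,P_3)=1$ is overcounted), and one must instead compare $F$ to a carefully chosen long subpath, or long bounded-degree subtree, of $T$ that genuinely realises the order of $q_0$. Handling this, and the further boundary cases ($v_F$ near $v_T$, $F$ nearly edgeless, $v_T$ small) where both sides of the target degenerate, looks to require an induction peeling leaves off $F$ alongside a decomposition of $T$ into high-degree ``hubs'' and bounded-degree ``threads'', with $q_0$ tracked through the decomposition. Going beyond cliques, cycles and trees --- where the subgraph poset of $H$ is far less rigid --- is the principal remaining obstacle, and the conjecture stays open in general.
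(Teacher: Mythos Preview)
You have misread the target. The paper's Theorems~\ref{thm:clique} and~\ref{thm:tree} fix the shape of $F$ (clique, cycle, bounded-degree tree) while $H$ is an \emph{arbitrary} $q$-sparse graph; you instead take $H$ to be a clique $K_r$, a cycle $C_m$, or a tree $T$, and let $F$ range over all subgraphs of $H$. These are orthogonal special cases of Conjecture~\ref{conj:main}, and yours is by far the easier direction. When $H=K_r$ or $H=C_m$, the counts $N(H,F)$ are essentially explicit and the comparison is a short calculation (your clique argument is fine; in the cycle argument the ``$F$-dependent constant'' hiding $1/\aut(F)$ and the $(n)_{v_F}/n^{v_F}$ correction are not bounded in $F$, so the case split $m=o(n)$ versus $m=\Omega(n)$ needs more care, though it can be repaired). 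By contrast, when $H$ is arbitrary and $F$ is a large clique, a long cycle, or a long path, bounding $N(H,F)$ requires extracting real structure from $q$-sparsity alone; this is what the paper does---the $\nu(H,F)\cdot B$ scheme of Proposition~\ref{prop:nu}, the density arguments behind Lemma~\ref{cl:no.three.K'} and Lemma~\ref{lem:cycle.key}, and the ``legal degree sequence / fit'' decomposition of Section~\ref{sec:trees}---and none of this machinery appears in your proposal.

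Even under your own reading, the proposal is incomplete: for $H$ a tree you handle only $\Delta(T)\ge C\log n$ and explicitly leave $\Delta(T)=O(\log n)$ as ``the main obstacle'', with only a vague plan. So in neither interpretation is this a proof of the title cases.
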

\nin
(Note ``$\sub H$" is unnecessary.)

This 
simple statement 
is our preferred form of 
\cite[Conj.~1.6]{dubroff2025second},
which would imply the 
"second" Kahn-Kalai
Conjecture \cite[Conj.~2.1]{kahn2007thresholds}. 
We will not go into background here,
just referring to the discussion in \cite{dubroff2025second},
but for minimal context recall
the original conjecture of \cite{kahn2007thresholds},
though it will not be needed below.

Define the \textit{threshold for $H$-containment}, 
$p_c(H)= p_c(n,H)$,
to be the unique $p$ for which
$\pr(G_{n,p}\supseteq H)=1/2$,
and set 
\[
\pE(H) =\pE(n,H)=\min\{p:\EE_pX_I \ge 1/2 ~ \forall I \sub H\}.
\]
This is essentially what 
\cite{kahn2007thresholds} calls the 
\textit{expectation threshold}, though
the name was repurposed in \cite{frankston2021thresholds}.
It is, trivially, a lower bound on
$p_c(H)$ since,
for any $I\sub H$,
$\pr(G_{n,p}\supseteq H) \leq \pr(G_{n,p}\supseteq I) \leq \EE_pX_I$.
The ``second Kahn--Kalai Conjecture" (so 
called in 
\cite{mossel2022second}), 
which was in fact the starting point for 
\cite{kahn2007thresholds}, is then  
\begin{conj}\cite[Conj.~2.1]{kahn2007thresholds}\label{KKC2}
There is a fixed $K$ such that for any graph $H$, 
\[p_c(H) < K \pE(H) \log v_H.\]
\end{conj}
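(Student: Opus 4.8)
The plan is to derive Conjecture~\ref{KKC2} from Conjecture~\ref{conj:main} (which we are free to assume) together with the resolution of the fractional Kahn--Kalai conjecture, i.e.\ the spread lemma of Frankston, Kahn, Narayanan, and Park \cite{frankston2021thresholds}. The point is that, once Conjecture~\ref{conj:main} is granted, the deduction is essentially immediate: the counting inequalities it supplies are, after a trivial manipulation, precisely a $p$-spread hypothesis for the copies of $H$ in $K_n$, and the spread lemma then finishes the job. So the real content of Conjecture~\ref{KKC2} lies entirely in Conjecture~\ref{conj:main}.

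Fix a graph $H$; we may assume $v_H\ge 3$ and $e_H\ge 2$, the remaining cases being trivial. Write $q_0=\pE(H)$. The first, elementary, step is that $H$ is $2q_0$-sparse: the minimum in the definition of $\pE(H)$ is attained, so $\EE_{q_0}X_I\ge\tfrac12$ for every $I\sub H$, whence
\[
\EE_{2q_0}X_I\ =\ 2^{e_I}\,\EE_{q_0}X_I\ \ge\ 2^{e_I-1}\ \ge\ 1\qquad (I\sub H),
\]
the case $e_I=0$ being immediate. Applying Conjecture~\ref{conj:main} with $q=2q_0$ and $p=2Kq_0$ therefore gives
\[
N(H,F)\ <\ \EE_pX_F\ =\ N(K_n,F)\,p^{e_F}\qquad (F\sub H).
\]

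Now let $\mu$ be the uniform probability measure on the copies of $H$ in $K_n$, regarded as a measure on the $e_H$-element edge sets of $E(K_n)$ that span a copy of $H$. For any $T\sub E(K_n)$, let $F$ be the graph spanned by $T$. Counting injective homomorphisms $F\to H$ (of which there are $N(H,F)$ times the number of automorphisms of $F$) and dividing by the analogous count into $K_n$ shows that the $\mu$-probability that a copy of $H$ contains all of $T$ equals $N(H,F)/N(K_n,F)$, which by the inequality above is strictly less than $\EE_pX_F/N(K_n,F)=p^{e_F}=p^{|T|}$ (and this is also true, trivially, when $F\not\sub H$). Hence $\mu$ is a $p$-spread probability measure on the minimal elements of the increasing family $\{G:G\supseteq H\}$, all of which have size $e_H\le\binom{v_H}{2}<v_H^2$. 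The spread lemma of \cite{frankston2021thresholds} now gives an absolute constant $C$ with $\pr(G_{n,p'}\supseteq H)\ge\tfrac12$ whenever $p'\ge Cp\log e_H$; therefore
\[
p_c(H)\ \le\ Cp\log e_H\ \le\ 4CK\,\pE(H)\log v_H\ =\ O(\pE(H)\log v_H),
\]
which is exactly Conjecture~\ref{KKC2}.

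So the whole difficulty is in Conjecture~\ref{conj:main} itself, which is open in general and which the rest of this paper addresses only in the first nontrivial cases---cliques, cycles, and trees---where even these turn out to be reasonably challenging. (This is precisely why \cite{dubroff2025second} singles out Conjecture~\ref{conj:main}, in its original form \cite[Conj.~1.6]{dubroff2025second}, as the statement to aim for.) The present plan, then, is just to record the short reduction above and to devote the real work to establishing the conjecture in these cases.
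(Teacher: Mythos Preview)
Your proposal is correct and is exactly the implication the paper points to: Conjecture~\ref{KKC2} is stated as an open conjecture, and the paper's only ``proof'' is the parenthetical remark that it follows from Conjecture~\ref{conj:main} via the main result of \cite{frankston2021thresholds} (with details deferred to \cite{dubroff2025second}). You have spelled out precisely that reduction---$2\pE(H)$-sparsity, the counting inequality from Conjecture~\ref{conj:main}, the resulting $p$-spread measure on copies of $H$, and the FKNP spread lemma---so there is nothing to add.
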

\nin
(That this is implied by \Cref{conj:main}
follows from the main result of 
\cite{frankston2021thresholds}; 
again, see \cite{dubroff2025second}.) 
In the limited setting to which it applies, 
\Cref{KKC2} 
is considerably stronger than the main 
conjecture of \cite{kahn2007thresholds} 
(called the ``Kahn--Kalai Conjecture" in
\cite{talagrand2010many}),
which is now a result of Pham and the third author \cite{park2024proof}. 

At this writing the best we know in the
direction of \Cref{conj:main}
is the main result of
\cite{dubroff2025second}, viz.

\begin{thm}
\cite[Theorem 1.8]{dubroff2025second}
\label{MT}
There is a fixed $K$ such that if $H$ is $q$-sparse and 
$p=Kq\log^2 n$, then
\beq{eq.MT} N(H,F) < \EE_pX_F \quad \forall F \sub H.\enq
Furthermore, there is a fixed $\ga>0$ such that
\eqref{eq.MT} holds if $H$ is $q$-sparse with
$q=\ga p\le 1/(3n)$. 
\end{thm}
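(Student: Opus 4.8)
The plan is to reformulate the inequality, reduce to a $p$-free statement, and prove that by an embedding count into $H$ fed by $q$-sparsity.

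\smallskip
\emph{Reduction.} Writing $\mathrm{inj}(F,H)$ for the number of injective homomorphisms $F\to H$, we have $\mathrm{inj}(F,H)=N(H,F)\,|\Aut(F)|$ and $\EE_pX_F=(n)_{v_F}\,p^{e_F}/|\Aut(F)|$, so $N(H,F)<\EE_pX_F$ is equivalent to $\mathrm{inj}(F,H)<(n)_{v_F}p^{e_F}$, and $q$-sparsity of $H$ is precisely the family of inequalities $(n)_{v_I}q^{e_I}\ge|\Aut(I)|\ge 1$, $I\subseteq H$. Since $\EE_pX_F=\EE_qX_F\cdot(p/q)^{e_F}$ with $p/q=K\log^2 n$, it suffices to prove, for an absolute constant $C$,
\[
N(H,F)\ \le\ \EE_qX_F\cdot(C\log n)^{2e_F}\qquad(\forall\,F\subseteq H,\ e_F\ge1);
\]
then $K>C^2$ gives \eqref{eq.MT}, strictly, via the surviving factor $(K/C^2)^{e_F}>1$, the edgeless case being trivial.

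\smallskip
\emph{Connected $F$.} Fix an edge $x_1x_2$ of $F$ and extend to a breadth-first order $x_1,x_2,x_3,\dots,x_{v_F}$ with back-degrees $d_i:=|\{j<i:x_jx_i\in E(F)\}|$; then $d_i\ge1$ for $i\ge3$ and $\sum_{i\ge3}d_i=e_F-1$. Building a copy of $F$ in $H$ along this order — an oriented edge of $H$ for $(x_1,x_2)$, then at each step a common neighbour of the already-placed neighbours — yields $\mathrm{inj}(F,H)\le 2e(H)\prod_{i\ge3}M_{d_i}(H)$, where $M_d(H)$ denotes the largest common neighbourhood of a $d$-subset of $V(H)$ (so $M_1=\Delta(H)$). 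Each factor is then bounded by testing $q$-sparsity on a suitable subgraph of $H$: $e(H)$ via $I=H$ and via $I$ a largest matching / union of stars inside $H$; and $M_d(H)=t$ via $K_{d,t}\subseteq H$ together with its disjoint-union versions $\ell\!\cdot\!K_{d,t}$. Converting the result into $n$'s and $q$'s and comparing against $\EE_qX_F=(n)_{v_F}q^{e_F}/|\Aut(F)|$ — passing between $n^{v_F}$ and $(n)_{v_F}$ costs only a factor $e^{O(v_F)}$, hence (as $v_F\le e_F+1$) only $e^{O(e_F)}$ — one checks that every factor is paid for with at most two logarithms of slack per edge, which is exactly the budget $(C\log n)^{2e_F}$.

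\smallskip
\emph{Disconnected $F$ and the second statement.} Grouping isomorphic components, $F=\bigsqcup_k a_kG_k$ with the $G_k$ distinct and connected, one has $N(H,F)\le\prod_k\binom{N(H,G_k)}{a_k}$, while in $\EE_qX_F=(n)_{v_F}q^{e_F}/\prod_k(a_k!\,|\Aut G_k|^{a_k})$ the factor $(n)_{v_F}$ matches $\prod_k(n)_{v_{G_k}}^{a_k}$ up to the familiar $e^{O(e_F)}$; feeding in the connected bound for each $G_k$ (with a slightly larger constant) closes the general case. For the second statement, $q=\alpha p\le 1/(3n)$ is rigid: for every $I\subseteq H$, $1\le\EE_qX_I\le n^{v_I}q^{e_I}$ forces $e_I/v_I\le\log n/\log(1/q)<1$, hence $e_I<v_I$ for every subgraph, i.e.\ $H$ — and thus every $F\subseteq H$ — is a forest. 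One then runs the same scheme with the embedding bound sharpened to exploit that $H$ itself is a forest (e.g.\ a sub-matching $mK_2$ of $F$ counted by $\binom{e(H)}{m}$, tree components by a genuine tree-embedding count) and with the $M_d(H)=O(nq^d)$ estimates carrying \emph{no} logarithmic term — the test-subgraphs are now forests, and $q\le 1/(3n)$ turns the Stirling factors in our favour — so that the comparison, now against $\EE_pX_F=\EE_qX_F\,\alpha^{-e_F}$, closes for any sufficiently small absolute $\alpha>0$.

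\smallskip
\emph{Main obstacle.} The crux is the uniform-in-$q$ accounting in the connected case. The naive per-factor estimates ($e(H)$ from $I=H$, $M_d$ from a single $K_{d,t}$) are far from tight when $q$ is small — e.g.\ $\Delta(H)$ can be of order $\log n/\log\log n$ while $nq\log^2n$ is much smaller — yet in that regime $H$ is globally sparse and $N(H,F)$ is correspondingly tiny, a saving the crude bounds miss. Capturing it forces one to apply $q$-sparsity to the binding disjoint-union subgraph of $H$ and to keep every elementary inequality sharp enough that only $(\log n)^2$ per edge (and $e^{O(1)}$ per vertex) is lost; assembling this into one argument valid across all ranges of $q$ is where the real work lies, the disconnected reduction and the forest case being comparatively routine once it is in hand.
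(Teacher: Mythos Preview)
This theorem is not proved in the paper; it is quoted from \cite{dubroff2025second}. (The paper only remarks, after \Cref{cl:d.range}, that its tree argument could be adapted to recover the second statement, and does not carry this out.) So there is no proof in the paper to compare against.

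On its own terms, your proposal is a plan rather than a proof --- your final paragraph says as much. The reduction to connected $F$ is fine, and for the second statement you correctly observe that $q\le 1/(3n)$ forces $H$ to be a forest. But the entire content of the first assertion lives in the unargued sentence ``one checks that every factor is paid for with at most two logarithms of slack per edge,'' which \emph{is} the theorem. Two specifics. First, the appeal to ``disjoint-union versions $\ell\cdot K_{d,t}$'' is unclear: knowing $M_d(H)=t$ yields one $K_{d,t}$ in $H$, not many disjoint ones, so that test subgraph does not bound $M_d$; what disjoint unions can bound is how many $d$-sets have a given common-neighbourhood size, but turning that into control of $\prod_i M_{d_i}(H)$ is a product-level argument you do not supply. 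Second, the termwise comparison of $2e(H)\prod_{i\ge3} M_{d_i}(H)$ against $(n)_{v_F}q^{e_F}$ is, as you yourself flag in the ``Main obstacle'' paragraph, exactly where the difficulty sits --- the per-step ratio $M_{d_i}(H)/(nq^{d_i})$ need not be $O(\log^2 n)$, and the compensating global sparsity of $H$ is precisely what a step-by-step bound cannot see. You name this problem but do not solve it. The second statement inherits the same gap: ``one then runs the same scheme'' refers back to a scheme that is not yet an argument. In short, what you have written is a reasonable wish list for a proof, not a proof.
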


In this paper we show that 
\Cref{conj:main} is correct for a few 
simple families of $F$'s, as follows. 
(Note that 
the sizes of these $F$'s can depend on $n$.)

\begin{thm}\label{thm:clique}
    There is a fixed $L$ such that the following holds. Suppose $H$ is $q$-sparse and $p=Lq$. If $F$ is a clique or a cycle, then
\[ N(H,F) < \EE_pX_F.\]
\end{thm}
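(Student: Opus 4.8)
The plan is to prove the stronger bound $N(H,F)\le C^{\,e(F)}\,\EE_qX_F$ for an absolute constant $C$; since $\EE_pX_F=(p/q)^{e(F)}\,\EE_qX_F=L^{e(F)}\,\EE_qX_F$, any $L>C$ then gives $N(H,F)<\EE_pX_F$. We may assume $F\sub H$ (else $N(H,F)=0$), which in particular makes $\EE_qX_{F'}\ge1$ for every $F'\sub F$, i.e.\ $q$ lies above the first‑moment threshold of $F$.

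The workhorses are a few ``comparison'' estimates for a $q$‑sparse $H$, each obtained by feeding a well‑chosen family of subgraphs $I\sub H$ into the defining inequality $\EE_qX_I\ge1$. First, testing on the stars $K_{1,d}$ and on the graphs ``$j$‑clique joined to $t$ independent vertices'' bounds $\Delta(H)$, and the number of common neighbours of any $j$‑clique of $H$, by their $G_{n,q}$‑expectations ($\asymp nq$ and $\asymp nq^{\,j}$) up to a factor that is $O(1)$ provided those expectations exceed a fixed power of $\log n$. Second, testing on $tF$ (disjoint copies of $F$) shows the maximum number of pairwise vertex‑disjoint copies of $F$ in $H$ is $O(\EE_qX_F)$, and more generally, testing on a ``sunflower'' of copies of $F$ (fixed core $Y$, vertex‑disjoint petals) bounds the number of petals in terms of $\EE_qX_F$, of $\EE_qX_{F[Y]}$, and of the quantity $\alpha_Y$ (the expected number of extensions of a fixed copy of $F[Y]$ to a copy of $F$ in $G_{n,q}$); via the sunflower lemma, either $N(H,F)$ is bounded by a fixed power of these petal‑counts, or a forbidden sunflower exists in $H$.

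With these in hand one reduces the count. For $F=K_v$, counting pairs (copy of $K_{j-1}$, vertex extending it to $K_j$) gives $j\,N(H,K_j)\le D_{j-1}\,N(H,K_{j-1})$ with $D_{j-1}$ the largest number of common neighbours of a $(j-1)$‑clique; telescoping down to $j=1$ yields $N(H,K_v)\le\tfrac1{v!}N(H,K_1)\prod_{j=1}^{v-1}D_j$. For $F=C_v$, deleting an edge of a copy of $C_v$ gives a path of length $v-1$ whose ends are $H$‑adjacent, and this is a bijection, so $v\,N(H,C_v)=\sum_{xy\in E(H)}P_{v-1}(x,y;H\sm xy)$, where $P_\ell(x,y;\cdot)$ counts $(x,y)$‑paths of length $\ell$; and one bounds $\sum_{xy\in E(H)}P_\ell(x,y;H)$ by induction on $\ell$ using $\sum_{xy\in E(H)}P_\ell(x,y;H)\lesssim(\max\text{ codegree of }H)\cdot W_{\ell-1}(H)$ together with $W_{\ell-1}(H)\le n\,\Delta(H)^{\ell-1}$ ($W_{\ell-1}$ counting paths of length $\ell-1$). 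In the ``comfortably above threshold'' regime — where $q$ is below a fixed constant but all the relevant $G_{n,q}$‑means are large compared with $\mathrm{polylog}(n)$ — the comparison estimates hold with only absolute‑constant loss, and plugging $D_j\lesssim nq^{\,j}$ (resp.\ $\Delta(H)\lesssim nq$, codegree $\lesssim nq^2$) into these identities telescopes to $N(H,F)\lesssim C^{\,e(F)}n^{v(F)}q^{\,e(F)}\asymp C^{\,e(F)}\EE_qX_F$; and if $q$ exceeds a fixed constant the trivial bounds $N(H,K_v)\le\binom nv$, $N(H,C_v)\le n^v/(2v)$ already beat $\EE_pX_F$.

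The main obstacle is the remaining, intermediate regime: $q$ a small power of $n$, at or just above the threshold of $F$ (so $q$ near $n^{-2/(v-1)}$ for cliques, near $n^{-1}$ for cycles), where $\EE_qX_F$ is still a growing power of $n$ — too large for the trivial bound — yet the codegree and sunflower comparisons only hold with $\mathrm{polylog}$ slack, too weak for the naive telescoping. Here I would run a case analysis on the size of $q$ relative to the threshold and combine the disjointness/sunflower constraints applied to $F$ itself and to its ``overlap configurations'' (fans of cliques through a common vertex; figure‑eights and longer chains of cycles meeting in small cores) against the degree and codegree constraints. The crux — and where the bulk of the work lies — is to squeeze out of this interplay that the number of copies of $F$ through any one vertex of $H$ exceeds its $G_{n,q}$‑mean by at most an absolute‑constant factor; summing over vertices then gives the theorem.
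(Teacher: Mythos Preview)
Your outline correctly dispatches the ``comfortable'' regimes, but it is not a proof: you explicitly identify the near-threshold range ($q$ close to $n^{-2/(r+1)}$ for $F=K_{r+1}$, or $q$ close to $1/n$ for $F=C_k$) as ``where the bulk of the work lies'' and then only describe what you ``would'' do---sunflowers, overlap configurations, a case analysis---without executing any of it. And the telescope really does break there: for $F=K_{r+1}$ at $q\approx n^{-2/(r+1)}$ one has $nq^{r-1}=n^{-(r-3)/(r-1)}\to 0$, so your required bound $D_{r-1}\lesssim nq^{r-1}$ is impossible once $K_r\sub H$; similarly for cycles at $q\approx 1/n$ the codegree and degree bounds carry unavoidable $\log n$ factors that swamp $(nq)^k=O(1)$. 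Your closing sentence (bound the copies through each vertex by a constant times the mean) is a restatement of the target, not a reduction to something easier.

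The paper's argument in this hard range is structurally different from your sketch. For cliques with $q<n^{-2/(r+1)}$ it uses the decomposition $N(H,F)\le \nu(H,F)\cdot B$, where $\nu(H,F)\le e\,\EE_qX_F$ comes from testing sparsity on a disjoint union, and the key step is a short density computation showing that the number of $K_{r+1}$'s sharing an edge with a fixed copy is at most $(er)^{r+1}$---small enough to be absorbed by $L^{\binom{r+1}{2}}$; for $q\ge n^{-2/(r+1)}$ it argues by contradiction, passing to an induced subgraph of high minimum ``clique-degree'' and showing its expectation is below $1$. For cycles the paper does not use a codegree telescope at all: it first proves the theorem for paths (this is the $\gD=2$ case of the separate tree theorem, itself the main technical contribution), and then combines $N(H,C_k)\le N(H,P_m)\cdot\gamma(k-m)$ with a delicate lemma bounding $\gamma(\ell)$, the maximum number of length-$\ell$ paths between two fixed vertices. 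None of these ingredients appears in your plan, and there is no evident way to substitute your sunflower idea for them.
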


\begin{thm}\label{thm:tree}
For any $\dd$, there exists an $L=L(\dd)$ such 
that if $H$ is $q$-sparse, $p=Lq$, and $F$ is a tree
with maximum degree $\dd$, then
\[
N(H,F) < \EE_pX_F.
\]
\end{thm}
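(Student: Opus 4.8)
We may assume $F\subseteq H$, so that $\EE_qX_I\ge 1$ for every $I\subseteq F$, and in particular $\EE_qX_F\ge 1$. Since $\EE_pX_F=L^{e_F}\EE_qX_F$, it is enough to show that the number $\mathrm{emb}(F,H)$ of embeddings $F\hookrightarrow H$ satisfies $\mathrm{emb}(F,H)<L^{e_F}(n)_{v_F}\,q^{e_F}$. We prove this by inducting on $F$ and stripping off a pendant star at each step.

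Suppose $e_F\ge 2$ (the cases $e_F\le 1$ are easy). Choose a longest path in $F$ and let $c$ be its second vertex; then every neighbour of $c$ but one—call it $w$—is a leaf of $F$, say $\ell_1,\dots,\ell_j$, where $1\le j=d_F(c)-1\le\Delta-1$. Writing $F':=F\setminus\{\ell_1,\dots,\ell_j\}$, so that $c$ becomes a leaf of $F'$ hanging off $w$, restriction of an embedding of $F$ to $F'$ gives
\[
\mathrm{emb}(F,H)\ \le\ \sum_{\phi\,:\,F'\hookrightarrow H}\bigl(d_H(\phi(c))\bigr)_{j}\ \le\ \sum_{\phi\,:\,F'\hookrightarrow H}d_H(\phi(c))^{\,j}.
\]
To iterate this one must keep track of degree-weighted embedding counts, so the statement really proved by induction is a ``decorated'' one bounding $\sum_{\phi\,:\,T\hookrightarrow H}\bigl(d_H(\phi(x))\bigr)_{a}$, for subtrees $T\subseteq F$, leaves $x$ of $T$, and $0\le a\le\Delta$, by something of the form $C(\Delta)^{\,e_T+a}(n)_{v_T+a}\,q^{\,e_T+a}$; the case $a=0$, $T=F$ is the desired bound. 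In the inductive step one either peels a pendant star as above—which shifts the weight exponent up by at most $\Delta-1$—or ``discharges'' a unit of the weight $d_H(\phi(x))$ into an honest pendant leaf at $x$; tracking a potential like $e_T$ plus a $\Delta$-weighted count of the leaves of $T$ shows the recursion terminates, and it is the $(\le\Delta-1)$-sized jumps in the weight exponent that force $L$ to depend on $\Delta$.

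The heart of the matter—and the only use of $q$-sparsity—is the base estimate feeding the recursion, namely a bound of the above shape in the case $T=K_1$: for $q$-sparse $H$ and $a\le\Delta(H)$,
\[
\sum_{v\in V(H)}\bigl(d_H(v)\bigr)_{a}\ =\ \mathrm{emb}(K_{1,a},H)\ <\ C(\Delta)^{\,a}\,(n)_{a+1}\,q^{a}
\]
(together with its degree-weighted variants). These are proved by contraposition: from an excessively large such count one greedily extracts a near-vertex-disjoint family of stars (and, for the weighted variants, of short subtrees) inside $H$ whose union $I$ then satisfies $\EE_qX_I<1$, contradicting $q$-sparsity; the greedy extraction and the resulting edge count rely on the fact that $q$-sparsity already forbids even mildly dense subgraphs—every $I\subseteq H$ has $e_I/v_I\le\log n/\log(1/q)$, and in particular $\Delta(H)$ is not too large. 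This passage from ``many copies of $F$'' to ``a subgraph dense enough to contradict $q$-sparsity'' is the principal obstacle, and is genuinely harder than for cliques (\Cref{thm:clique}): the copies of a tree can be spread over disjoint vertex sets, so the offending density must be assembled rather than simply located.
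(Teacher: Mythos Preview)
Your outline is a genuinely different strategy from the paper's, but it has two real gaps.

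\textbf{The base case is the whole problem for stars.} The estimate you isolate as ``the heart of the matter,'' namely $\sum_{v}(d_H(v))_a<C(\Delta)^a(n)_{a+1}q^a$, is precisely the statement of \Cref{thm:tree} for $F=K_{1,a}$. That is not circular, but it still needs a proof, and your sketch (``greedily extract a near-vertex-disjoint family of stars whose union $I$ has $\EE_qX_I<1$'') does not supply one. In the regime $nq=\Theta(1)$ a $q$-sparse $H$ is a forest with $\Delta_H$ as large as $\Theta(\log n/\log\log n)$, and neither the density bound $e_I/v_I\le\log n/\log(1/q)$ nor the packing bound $\nu(H,K_{1,a})\le e\,\EE_qX_{K_{1,a}}$ closes the gap: each yields only $N(H,K_{1,a})\lesssim \EE_qX_{K_{1,a}}\cdot\Delta_H^{\,a-1}$, off by a power of $\log n$. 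The paper does \emph{not} handle this regime from scratch either; it quotes the second part of \Cref{MT} as a black box (see \Cref{cl:d.range}). You would need something of comparable strength, and ``greedy extraction'' as stated is not it.

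\textbf{The induction does not close.} After ``discharging'' a unit of weight by attaching a pendant leaf at $x$, the remaining weight $(d_H(\phi(x)))_{a-1}$ is still located at $x$, which is no longer a leaf of the enlarged tree; so your inductive hypothesis (stated for weights at leaves) does not apply. If instead you peel a pendant star at some $c\neq x$, you pick up a \emph{second} weight $(d_H(\phi(c)))_j$ and must now track products of weights at several vertices; it is not clear the recursion terminates with the right constant, and you have not said what the ``degree-weighted variants'' of the base case are or why they hold.

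For contrast, the paper's argument does no induction on $F$. It first disposes of $np\le L/3$ via \Cref{MT} and of $np\ge \eps^{-1/3}\log n$ via \Cref{prop:max.deg}. In the remaining range it classifies labelled copies $\hat F\subseteq H$ by a ``legal degree sequence'' $\underline d=(d_0,\dots,d_j)$ recording, for each $v_i$, whether its image has large forward $H$-degree (recorded exactly) or small (recorded as the number $f_i$ of children). Summing over $\underline d$ is then split at $D(\underline d)=\sum_{\text{big}}d_i=j\log(np)$: below the threshold a direct product bound over the $d_i$ suffices; above it one applies the $\nu\cdot B$ idea of \Cref{prop:nu} to enlarged objects $R\supseteq F$ carrying the extra edges (these have enough edges that $\EE_qX_R$ is tiny), and bounds the number of fitting copies of such $R$'s sharing an edge with a given one. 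The bounded-degree hypothesis enters only through $\prod_{\text{big}}d_i^{f_i}\le\prod_{\text{big}}d_i^{\Delta}$.
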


\nin 
\emph{Remarks.}
(a)  It is easy to see
(see \cite[Proposition 2.4]{dubroff2025second})
that if \Cref{conj:main}
is true for each component of $F$ then
it is true for $F$; 
in particular 
\Cref{thm:tree} implies the 
conjecture for forests as well as trees.

\nin
(b)
Even the above elementary cases are, 
to date, not so easy, and the present
work is meant partly to highlight this,
and partly to give some first ideas
on how to proceed.  
One may of course wonder whether this 
(seeming) difficulty is telling us 
the conjecture is simply wrong, but (and somewhat
contrary to our initial opinion) we now
tend to think it is true.

\mn
\textbf{Outline and preview.}
\Cref{sec.prelim} includes definitions and a few initial observations, following which
the clique portion of \Cref{thm:clique},
\Cref{thm:tree}, and
the cycle portion of \Cref{thm:clique}
are proved in Sections~\ref{sec:clique},
\ref{sec:trees} and \ref{sec:cycles}
respectively.  Of these:

Cliques are our easiest case and may serve as 
a warm-up for what follows. 
\Cref{thm:clique}
for cycles is postponed to 
\Cref{sec:cycles} since it 
depends on the result for paths, a first 
case of \Cref{thm:tree}. 
While
the proof of \Cref{lem:cycle.key}
seems to us quite interesting 
(as does the fact that getting 
from paths to cycles seems not at all 
immediate), 
we regard the proof of \Cref{thm:tree}
as the heart of the paper.
Here it may be helpful to think of the (prototypical) case of paths.
A simpler argument for even this very
simple case would be welcome, as
(of course) would 
be a proof of \Cref{thm:tree}
without the degree restriction.

\subsection*{Usage} 
For a graph $J$ we use
$v_J$ and $e_J$
for $|V(J)|$ and $|E(J)|$,
and $\gD_J$ for the maximum degree in $J$. 
The identity of $H$
(in Theorems~\ref{thm:clique} 
and \ref{thm:tree}) is fixed
throughout, and we often 
use \emph{copy of $J$} for
\emph{copy of $J$ in H}.

As usual,
$J[U]$ is the subgraph of $J$ 
induced by $U\sub V(J)$, and 
$v\sim w$ denotes adjacency of $v,w\in V(J)$.
For $A, B \sub V(J)$
(here always disjoint), 
$\nabla_J(A, B):=\{\{v,w\} \in E(J):v \in A, w \in B\}$ and $\nabla_J(A):=\nabla_J(A, V(J) \setminus A)$. We also use $\nabla_J(v)$ for $\nabla_J(\{v\})$ 
(and similarly for $\nabla_J(v,\cdot)$) and $d_J(v)=|\nabla_J(v)|$.

Recall (see e.g. \cite{janson2011random}) that the \emph{density} of a graph $J$ with $v_J \neq 0$ is 
$d(J)=e_J/v_J$, and the 
\textit{maximum density} of $J$ is 
$m(J) =\max\{d(I):I \sub J\}$.

Throughout the paper, $\log$ means $\log_2$. 
For positive integers $a$ and $b$, we use
$[a]=\{1,2, \ldots a\}$, $[a,b]=\{a, a+1, \ldots, b\}$, and $(a)_b=a(a-1)\cdots(a-b+1)$.  
We make no effort to keep our constant
factors small, and,
in line with common practice, often
pretend large numbers are integers.

\section{Preliminaries}\label{sec.prelim}

Note that, in proving \Cref{conj:main}, we may 
assume $n$ is somewhat large, since otherwise the conjecture 
is vacuous for large enough $L$. We may also 
assume that $L$ is somewhat large, so
\beq{qL}
q =p/L \le 1/L
\enq
is somewhat small.

We will make occasional, usually tacit, use
of the familiar fact that
for positive integers $a,b$, 
\beq{elementary}
(a)_b > (a/e)^b. 
\enq

\begin{prop}\label{prop:max.deg}
If $H$ is $q$-sparse, then $\gD_H\le \max\{\log n, 2enq\}.$ In particular, if $q\ge \log n/n$, then $\gD_H\le 2enq$.
\end{prop}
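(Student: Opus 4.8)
The plan is to bound $\gD_H$ by looking at the star $I = K_{1,d}$ with $d = \gD_H$, which is a subgraph of $H$ (take a maximum-degree vertex together with its neighbors). Since $H$ is $q$-sparse, we must have $\EE_q X_I \ge 1$. So the whole argument reduces to understanding when the expected number of copies of $K_{1,d}$ in $G_{n,q}$ drops below $1$, and converting that into an upper bound on $d$.

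First I would write down $\EE_q X_{K_{1,d}}$ explicitly. A copy of $K_{1,d}$ is determined by a center vertex and an (unordered) $d$-set of leaves, and all $d$ edges must be present, so
\[
\EE_q X_{K_{1,d}} = n \binom{n-1}{d} q^{d} \le n \left(\frac{enq}{d}\right)^{d},
\]
using $\binom{n-1}{d} \le (en/d)^d$. The $q$-sparseness hypothesis forces this to be at least $1$, i.e. $n(enq/d)^d \ge 1$, equivalently $d \log(d/(enq)) \le \log n$ (all logs base $2$, as in the paper).

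Next I would split into the two regimes in the statement. If $d \le \log n$ we are already done, so assume $d > \log n$; I want to show $d \le 2enq$. Suppose for contradiction $d > 2enq$. Then $d/(enq) > 2$, so $\log(d/(enq)) > 1$, and the inequality $d \log(d/(enq)) \le \log n$ gives $d \le d\log(d/(enq)) \le \log n$, contradicting $d > \log n$. This handles the main claim; the ``in particular'' is then immediate, since $q \ge \log n / n$ gives $2enq \ge 2e\log n > \log n$, so the maximum of the two bounds is $2enq$.

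I do not expect a serious obstacle here — this is a one-line estimate dressed up. The only mild point of care is keeping the base-$2$ logarithm consistent with the factor $e$ (so one should really write $\log(d/(enq)) \ge 1$ as $d/(enq) \ge 2$, which is exactly the threshold $d \ge 2enq$, making the constant $2e$ natural rather than optimized). One could instead use the cruder bound $\binom{n-1}{d} \le n^d$ and get $\gD_H \le \max\{\log n, nq\}$-type statements, but the $(en/d)^d$ form is what makes the clean constant $2e$ fall out, so I would use that.
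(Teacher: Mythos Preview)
Your proof is correct and takes essentially the same approach as the paper: both show that a $d$-star with $d>\max\{\log n,2enq\}$ satisfies $\EE_qX_{K_{1,d}}\le n(enq/d)^d<n\cdot 2^{-d}<1$, contradicting $q$-sparseness. The only cosmetic difference is that the paper argues directly (such a star is not in $H$), while you phrase it as the contrapositive starting from $d=\gD_H$.
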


\begin{proof}
If $R$ is a $k$-star with $k> \max\{\log n, 2enq\}$,
then (using \eqref{elementary} for the second inequality)
\[
\EE_qX_R
< n \C{n}{k}q^k
< n\left(\frac{enq}{k}\right)^k< n 2^{-k}<1,
\]
so $R\not\sub H$.
\end{proof}

\begin{prop}\label{prop:dense}
If $H$ is $q$-sparse, then $m(H)<\log n$. If in addition 
$q \le  n^{-c}$, then $m(H)< 1/c$.
\end{prop}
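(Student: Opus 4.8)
The plan is to read off a bound on the density of \emph{every} subgraph $I\sub H$ directly from $q$-sparseness by a one-line first-moment computation, and then take the maximum. So first I would fix an arbitrary $I\sub H$ (with $e_I\ge 1$, since otherwise $d(I)=0$ and there is nothing to check). The number of copies of $I$ in $K_n$ is $(n)_{v_I}/|\Aut(I)|\le (n)_{v_I}<n^{v_I}$, and each such copy survives in $G_{n,q}$ with probability $q^{e_I}$, so $q$-sparseness gives
\[
1\le \EE_qX_I=\frac{(n)_{v_I}}{|\Aut(I)|}\,q^{e_I}<n^{v_I}q^{e_I}.
\]
Taking $\log$ (base $2$, as throughout) and rearranging, $e_I\log(1/q)<v_I\log n$, i.e.
\[
d(I)=\frac{e_I}{v_I}<\frac{\log n}{\log(1/q)}.
\]

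Both assertions then drop out. For the first, \eqref{qL} lets us assume $q\le 1/L$ with $L$ large, so $\log(1/q)>1$ and hence $d(I)<\log n$ for every $I\sub H$; taking the maximum over the (finitely many) such $I$ gives $m(H)<\log n$. For the second, if in addition $q\le n^{-c}$ then $\log(1/q)\ge c\log n$, so the same display gives $d(I)<1/c$ for every $I\sub H$, whence $m(H)<1/c$.

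I do not expect a real obstacle here: the whole content is the crude bound $\EE_qX_I<n^{v_I}q^{e_I}$ together with $q$-sparseness. The only minor points are the strictness of the inequalities --- which is automatic from $(n)_{v_I}<n^{v_I}$ when $v_I\ge 2$, and trivial when $e_I=0$ --- and the harmless use of \eqref{qL} to guarantee $\log(1/q)\ge 1$, so that $\log n/\log(1/q)\le\log n$ is available in the first assertion (without a lower bound on $1/q$ the claim $m(H)<\log n$ would of course fail).
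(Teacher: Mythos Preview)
Your proof is correct and essentially identical to the paper's: both use only the crude bound $\EE_qX_I<n^{v_I}q^{e_I}$ together with $q$-sparseness and \eqref{qL}. The paper phrases it contrapositively (if $d(R)\ge\log n$ then $\EE_qX_R<(nL^{-\log n})^{v_R}<1$, and similarly for the second part), whereas you take logs and solve for $d(I)$ directly, but this is purely cosmetic.
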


\begin{proof}
If $d(R)\ge \log n$ (that is, $e_R \ge v_R\log n$), then
\[
\EE_qX_R
< n^{v_R}q^{e_R}\
\le \left(nq^{\log n}\right)^{v_R}
\le (nL^{-\log n})^{v_R}<1
\]
(see \eqref{qL}); so $H\not\supseteq R$.

Similarly, if $q\le n^{-c}$ and 
$d(R) \geq 1/c$, 
then 
\[
\EE_qX_R < n^{v_R}q^{e_R}\le
\left(nq^{1/c}\right)^{v_R}\le 1
\]
(so $H\not\supseteq R$).
\end{proof}

\begin{cor}\label{cor:e_H}
If $H$ is $q$-sparse, then $e_H< n\log n$,
and $e_H < n/c\,$ if $\,q \le n^{-c}$.
\end{cor}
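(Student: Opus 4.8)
The plan is to obtain this immediately from \Cref{prop:dense}, which already controls the maximum density $m(H)$. First I would note that since $H$ is $q$-sparse, applying the defining inequality with $I=H$ gives $\EE_qX_H\ge 1>0$; in particular $H$ has at least one copy in $K_n$, so $v_H\le n$. (If $v_H=0$ then $e_H=0$ and there is nothing to prove, so assume $v_H\ge 1$.) Next, straight from the definitions of $d(\cdot)$ and $m(\cdot)$,
\[
e_H=d(H)\,v_H\le m(H)\,v_H\le m(H)\,n,
\]
and now \Cref{prop:dense} finishes the job: $m(H)<\log n$ yields $e_H<n\log n$, while under the hypothesis $q\le n^{-c}$ we get $m(H)<1/c$ and hence $e_H<n/c$.

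I do not expect any real obstacle here: the corollary is essentially a restatement of \Cref{prop:dense} after multiplying by $v_H$ and using the crude bound $v_H\le n$. The only step worth isolating is precisely that $v_H\le n$, which is where $q$-sparseness (as opposed to merely knowing $m(H)<\log n$) enters; but this is immediate from $\EE_qX_H\ge 1$, since a graph on more than $n$ vertices has no copy in $K_n$ and thus $\EE_qX_H=0$.
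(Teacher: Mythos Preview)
Your proof is correct and is precisely the intended argument: the paper states this as a corollary of \Cref{prop:dense} without further comment, and the one-line derivation $e_H=d(H)\,v_H\le m(H)\,n$ (together with $v_H\le n$, which follows from $\EE_qX_H\ge 1$) is exactly what is meant.
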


We denote by $\nu(H,J)$ the maximum size of an 
edge-disjoint collection of copies of $J$ in $H$. 
The following simple observation will be important.
\begin{prop}\label{prop:nu}
If $H$ is $q$-sparse, then for any $J$, $\nu(H,J) \le e\EE_qX_J.$
\end{prop}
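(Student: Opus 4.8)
The plan is to apply $q$-sparsity to a single, carefully chosen subgraph of $H$ — the union of a maximum edge-disjoint family of copies of $J$. Set $t=\nu(H,J)$, fix edge-disjoint copies $J_1\dots J_t$ of $J$ in $H$, and let $U=\bigcup_{i=1}^{t}J_i\sub H$; edge-disjointness gives $e_U=t\,e_J$. Since $H$ is $q$-sparse and $U\sub H$,
\[
1\le \EE_q X_U=q^{t e_J}\,N(K_n,U),
\]
where $N(K_n,W)$ denotes the number of copies of a graph $W$ in the complete graph $K_n$ (so that $\EE_q X_W=q^{e_W}N(K_n,W)$). It therefore suffices to prove the purely combinatorial bound $N(K_n,U)\le N(K_n,J)^{t}/t!$: granting it, $1\le (\EE_q X_J)^{t}/t!$, so $(\EE_q X_J)^{t}\ge t!=(t)_t>(t/e)^t$ by \eqref{elementary}, whence $\nu(H,J)=t<e\,\EE_q X_J$, which gives the proposition.

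For the combinatorial bound, the point is that a single copy of $U$ in $K_n$ already records a whole $t$-element family of pairwise edge-disjoint copies of $J$. Precisely, if $C$ is a copy of $U$ in $K_n$ and $g\colon U\to C$ is an isomorphism, then $\{g(J_1)\dots g(J_t)\}$ is a family of $t$ pairwise edge-disjoint copies of $J$ in $K_n$ whose union is exactly $C$. Distinct copies of $U$ thus produce distinct families (a family of pairwise edge-disjoint copies determines its union), so $N(K_n,U)$ is at most the number of unordered $t$-element families of pairwise edge-disjoint copies of $J$ in $K_n$; and since pairwise edge-disjoint copies are in particular pairwise distinct (we use $e_J\ge1$ here), that number is at most $\binom{N(K_n,J)}{t}\le N(K_n,J)^{t}/t!$.

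The only step carrying real content is $N(K_n,U)\le N(K_n,J)^{t}/t!$, and inside it the one thing to spell out with care is the injectivity assertion that distinct copies of $U$ give distinct $J$-families — this is precisely where edge-disjointness, and hence the crucial factor $1/t!$, enters. One should also dismiss separately the degenerate case $e_J=0$ (there edge-disjointness is vacuous and copies of $J$ need not be distinct), but the statement is trivial in that case. Everything else is bookkeeping; note in particular that the argument uses $q$-sparsity only through the single inequality $\EE_q X_U\ge1$ applied to $U$.
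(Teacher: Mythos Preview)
Your proof is correct and takes essentially the same approach as the paper: both arguments hinge on the combinatorial bound $N(K_n,U)\le \binom{N(K_n,J)}{t}$ for $U$ an edge-disjoint union of $t$ copies of $J$, and then combine this with $q$-sparsity and \eqref{elementary}. The paper phrases it as a contradiction (if $\nu>e\EE_qX_J$ then $\EE_qX_R<1$, so $H\not\supseteq R$), while you apply sparsity directly to the actual $U\sub H$, but this is purely cosmetic.
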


\nin
This is helpful because (roughly):  trivially, 
\beq{N.nu.B}
N(H,J)\le \nu(H,J)\cdot B
\enq
for any bound $B$
on the number of copies of $J$ (in $H$) 
sharing an edge with 
a given copy; and possible bounds $B$ should be 
better than bounds on $N(H,J)$ itself, since
the number of starting points
for a copy of $J$ meeting a given copy is 
at most $v_J$, rather than
the usually much larger $n$.
This idea plays a main role below, and again
in \cite{dubroff2025second},
which was inspired in large part by the ideas
introduced here.

\begin{proof}
Let $R$ be 
the edge-disjoint union of $\nu$ copies of $J$,
with $\nu>e\EE_qX_J$. Then
\[
\EE_qX_R 
=N(K_n,R)q^{e_R}\le {N(K_n,J) \choose \nu}
q^{\nu\cdot e_J}< \left(\frac{eN(K_n,J)q^{e_J}}{\nu}\right)^{\nu}=\left(\frac{e\EE_qX_J}{\nu}\right)^\nu < 1,
\]
so $H\not\supseteq R$.
\end{proof}

\section{Cliques}
\label{sec:clique}

Here we prove the clique portion of \Cref{thm:clique}.
Recall that $p=Lq$, with $L$ fixed and somewhat 
large (large enough to support the assertions below),
and let $F=K_{r+1}$ ($r \in [2,n-1]$) (noting that $r =1$, which could easily
be included here, is immediate from \Cref{prop:nu}). We 
divide possibilities for $q$ into two ranges, for which 
we use different arguments.

\mn
\emph{Small $q$}.
Suppose
\beq{eq:p.ub} q< n^{-2/(r+1)}.\enq

\nin
This is our first use of the strategy sketched
following \Cref{prop:nu}; the desired bound
on $B$ 
(in \eqref{N.nu.B}) 
is provided by the next observation.

\begin{lem}\label{cl:no.three.K'}
If $H$ is $q$-sparse (with $q$ as in \eqref{eq:p.ub})
and $K$ is a copy of $F$ (in $H$),
then number of copies of $F$ that share edges with $K$ is less than $(er)^{r+1}$.
\end{lem}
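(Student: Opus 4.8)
The plan is to count copies of $F = K_{r+1}$ that share at least one edge with a fixed copy $K$ by classifying them according to how many vertices they share with $K$. Write $V(K) = \{x_1, \ldots, x_{r+1}\}$. Any copy $K'$ of $F$ sharing an edge with $K$ must satisfy $|V(K') \cap V(K)| \ge 2$ (since sharing an edge means sharing its two endpoints). For each $j \in [2, r+1]$, I would bound the number of copies $K'$ with $|V(K') \cap V(K)| = j$. Such a $K'$ is obtained by choosing the $j$ vertices of $K'$ inside $V(K)$ (at most $\binom{r+1}{j} < 2^{r+1}$ ways) and then choosing the remaining $r+1-j$ vertices outside; crucially, once those vertices are selected, $K'$ is determined as a clique. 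So the count for a given $j$ is at most $2^{r+1}$ times the number of ways to extend a fixed $j$-subset $S \subseteq V(K)$ to a copy of $K_{r+1}$ in $H$ using $r+1-j$ new vertices.

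The heart of the matter is the extension count: how many cliques $K_{r+1}$ in $H$ contain a fixed $j$-set $S$ (which itself spans a clique, being inside $K$)? I would bound this by a first-moment / $q$-sparseness argument. If there were at least $M$ such extensions, then $H$ would contain the graph $R$ consisting of $S$ together with $M$ cliques on $r+1-j$ vertices each, all attached to $S$; more simply, one can just observe that the number of $(r+1-j)$-sets $T$ outside $V(K)$ such that $S \cup T$ spans a $K_{r+1}$ is at most the number of $(r+1-j)$-subsets of $V(H)$, and then use $q$-sparseness applied to an appropriate subgraph to force this to be small. The clean statement I want is: the number of copies of $K_{r+1}$ through a fixed edge is at most (roughly) $\EE_q X_{K_{r+1}} / \EE_q X_{K_2}$-type quantity — but more directly, since $S$ spans a clique already, extending by $r+1-j$ vertices adds $\binom{r+1}{2} - \binom{j}{2}$ edges, so by $q$-sparseness the number of extensions is controlled because $H$ cannot contain too many copies of the "book" graph $B$ (= $S$ plus one pendant $(r+1-j)$-clique-extension) — specifically, if $H$ contained $\ell$ edge-disjoint-ish copies we'd violate $\EE_q X \ge 1$ for the union. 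Here the condition $q < n^{-2/(r+1)}$ from \eqref{eq:p.ub} is exactly what makes $\EE_q X_{K_{r+1}} < n^{r+1} q^{\binom{r+1}{2}}$ small enough (it forces $m(H) < (r+1)/2$ via \Cref{prop:dense}, so $H$ has no copy of $K_{r+1}$ at all once $r$ is... no — rather, it constrains densities so that the number of extensions of any fixed large clique-fragment is bounded by something like $r$ per new vertex).

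Putting it together: summing over $j \in [2, r+1]$ gives a bound of the form $\sum_j 2^{r+1} \cdot (\text{extensions})_j$, and I expect each term to be at most $(cr)^{r+1-j}$ or so, with the total dominated by the $j=2$ term, yielding something comfortably below $(er)^{r+1}$ after absorbing the $2^{r+1}$ factor. \textbf{The main obstacle} I anticipate is getting the extension count sharp enough: a naive bound of "$n^{r+1-j}$ choices of new vertices" is far too weak, so I need to genuinely exploit $q$-sparseness — presumably via \Cref{prop:nu} or \Cref{prop:dense} applied to the subgraph of $H$ that is the union of all the extending cliques through $S$ — to replace each factor of $n$ by a factor of order $r$ (using $q < n^{-2/(r+1)}$ to convert $nq^{(r+1)/2} < 1$ into the needed per-vertex bound). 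Making that step clean, and checking it survives uniformly over all $j$ and over the range of $r$, is where the real work lies; everything else is bookkeeping with $\binom{r+1}{j} < 2^{r+1}$ and the crude inequality \eqref{elementary}.
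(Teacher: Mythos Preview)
Your plan heads in the right direction but is organized less efficiently than the paper's proof, and you have not actually pinned down the key calculation.

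The paper skips the stratification by $j = |V(K') \cap V(K)|$ entirely. It takes the union $R$ of \emph{all} copies of $F$ sharing an edge with $K$, bounds $v_R$, and finishes with the crude $N(R,F) \le \binom{v_R}{r+1}$. The bound on $v_R$ comes from an incremental buildup $R = K \cup R_1 \cup \cdots \cup R_m$ (each $R_i$ a copy of $F$ sharing an edge with $K$ and contributing at least one new edge); writing $v_i,e_i$ for the numbers of new vertices and edges of $R_i$, $q$-sparseness gives $1 \le \EE_q X_R < n \prod_i n^{v_i} q^{e_i}$, and a short check using \eqref{eq:p.ub} --- a copy with $v_i\in[r-1]$ new vertices must add at least $\binom{r+1}{2} - \binom{r+1-v_i}{2}$ new edges, forcing $n^{v_i}q^{e_i} < n^{-1/(r+1)}$ in every case --- yields $m \le r$, hence $v_R \le r^2+1$ and $N(R,F) < (er)^{r+1}$.

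That incremental per-copy calculation is exactly the ``per-vertex bound'' you gesture at, but neither \Cref{prop:nu} (which controls only \emph{edge-disjoint} copies) nor \Cref{prop:dense} (which bounds only the ratio $e_R/v_R$, not $v_R$ itself) delivers it; one has to do it by hand via $1\le \EE_q X_R$. Your stratification by $j$ would require rerunning essentially this same argument for each $j$-subset $S \subseteq V(K)$, and the $\sum_j \binom{r+1}{j}$ factor from choosing $S$ threatens a spurious $2^{r+1}$ loss --- recoverable only via the sharper per-$j$ extension bound you hope for but do not prove, which is more work than the paper's single global pass.
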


\begin{proof}
Let $R$ be the union of the copies of $F$ that share  edges with $K$. We show that $v_R$ can't be 
too large, and, given this,
use the crudest possible
bound on $N(R,F)$.

Set $K=R_0$ and choose copies 
$R_1, R_2\dots R_m$ of $F$ that
share edges with $K$ and
satisfy
\[
E(R_i) \not\sub \cup_{j<i} E(R_j) 
\,\,\forall i \in [m]\,\,\,\, \text{and} \,\,\,\,\cup_{i=0}^m R_i=R.
\]
We claim that 
\beq{mler}
m\le r.
\enq

\begin{subproof} Set 
$v_i=|V(R_i)\sm \cup_{j <i} V(R_j)|$ and $e_i=|E(R_i) \setminus \cup_{j <i} E(R_j)|$.  Then
(since $H$ is $q$-sparse, and using 
\eqref{eq:p.ub} for the third inequality)
\beq{eq:expR}
1\le \EE_qX_R<  
n^{v_R}q^{e_R}= n^{r+1}q^{\C{r+1}{2}}\prod_{i=1}^m \left(n^{v_i}q^{e_i}\right) <
n\prod_{i=1}^m \left(n^{v_i}q^{e_i}\right).
\enq
Again by \eqref{eq:p.ub}, we have
$n^{v_i}q^{e_i} < n^{-2/(r+1)}$ if $v_i = 0$, while
$v_i \in [r-1]$ gives 
$e_i \ge {{r+1} \choose 2}-{r+1 -v_i \choose 2}$ and 
\[
n^{v_i}q^{e_i} < n^{v_i -2\left({{r+1} \choose 2} - {r+1 -v_i \choose 2}\right)/(r+1)}.
\]
Here the exponent on the r.h.s.\ 
is maximized (over $v_i \in [r-1]$)
at $v_i = 1$ and $v_i = r-1$, yielding
\[
n^{v_i}q^{e_i} < n^{-1 + 2/(r+1)}.
\]
So in any case, 
$n^{v_i}q^{e_i} < n^{-1/(r+1)}$
(since $r \geq 2$),
and the r.h.s.\ of \eqref{eq:expR} is less than 
$   
n \cdot n^{-m/(r+1)},
$   
yielding \eqref{mler}.\end{subproof}

Thus $v_R \leq r^2+1$ (say) and
$   
N(R,F) \leq \C{r^2+1}{r+1}< (er)^{r+1}. 
$     
\end{proof}

Finally, the combination of \Cref{prop:nu} and \Cref{cl:no.three.K'} gives (for slightly large $L$)
\[
N(H,F)\le \nu(H,F) \cdot (er)^{r+1} \le 
e\cdot \EE_qX_F(er)^{r+1}<L^{{r+1} \choose 2}\EE_qX_F= \EE_pX_F,
\]
so we have \Cref{thm:clique} in this case.\qed

\mn
\emph{Large q}.
Now suppose
\[   
q \ge n^{-2/(r+1)}.
\]   
Then 
\beq{eq:lb1} \EE_pX_F= \C{n}{r+1}p^{\C{r+1}{2}}
\ge \left(\frac{np^{r/2}}{r+1}\right)^{r+1} \ge n\left(\frac{L^{r/2}}{r+1}\right)^{r+1}>n\cdot (L/4)^{r(r+1)/2}.\enq
We will argue by contradiction, showing that if
$N(H,F) \ge \EE_pX_F$, then
there is an $R \sub H$ with $\EE_qX_R<1$.

Let 
\beq{eq:a.def} 
a=\EE_pX_F/n~\left(>(L/4)^{r(r+1)/2}\right);
\enq
so we are assuming
\[N(H,F)\ge an.\]

Recall that a \emph{hypergraph}, $\h$,
on (\emph{vertex set}) $V$ is a collection of 
subsets (\emph{edges}) of $V$;
\emph{degree} for hypergraphs
is defined as for graphs.
We recall a standard fact:

\begin{obs}\label{obs:min.deg} Let $\cH$ be the hypergraph on $V(H)$ whose edges are the 
vertex sets of 
copies of $F$ in $H$. With $a$ as in \eqref{eq:a.def}, there is a $W \sub V(H)$ such that
\beq{eq:min.deg}
\text{$\cH[W]$ has minimum degree at least $a$.}
\enq
\end{obs}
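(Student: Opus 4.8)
\textbf{Proof plan for \Cref{obs:min.deg}.}

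The statement is the standard ``a graph (or hypergraph) with many edges contains a subgraph of large minimum degree'' fact, applied to the copy-hypergraph $\cH$. The plan is to run the usual greedy deletion procedure: start with $W_0 = V(H)$, and as long as the current hypergraph $\cH[W_i]$ has a vertex $v$ of degree less than $a$, delete $v$ to form $W_{i+1} = W_i \setminus \{v\}$. This terminates either when we reach a nonempty $W$ with minimum degree at least $a$ (which is what we want), or when we have deleted everything. It suffices to rule out the latter.

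The key point is a counting bound: each edge of $\cH$ is a set of exactly $r+1$ vertices (the vertex set of a copy of $F = K_{r+1}$), so every deletion step removes at most $a$ edges (the edges incident to the deleted low-degree vertex, but here we should be slightly careful — an edge of $\cH$ is removed when the \emph{first} of its $r+1$ vertices is deleted, and at that moment that vertex has $\cH$-degree $< a$, so at most $a$ edges vanish per step). Since there are at most $n = |V(H)|$ deletion steps, the total number of edges destroyed is at most $an$. But $|E(\cH)| = N(H,F) \ge an$ by assumption, so not all edges can be destroyed; hence the process halts with a nonempty $W$, and by construction $\cH[W]$ has minimum degree at least $a$. (If one wants strict inequalities to line up, note $N(H,F) \ge an$ together with ``$\le an$ edges destroyed over $\le n$ steps'' still forces the process to stop before exhausting $V(H)$, since a completely emptied vertex set carries no edges while at least one edge survives; alternatively one replaces $n$ by $|V(H)|$ and runs the same bound.)

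There is essentially no obstacle here — this is a textbook argument and the only thing to watch is the bookkeeping convention for ``when'' an edge of $\cH$ is counted as destroyed, so that each of the at most $n$ steps is charged at most $a$ edges. The substance of the section lies in what is done with the set $W$ afterward, not in producing it.
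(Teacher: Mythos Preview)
Your proposal is correct and is exactly the paper's argument: iteratively delete vertices of degree less than $a$, and observe that fewer than $an \le N(H,F)$ edges are destroyed, so the process halts at a nonempty $W$. The only tiny tightening is to use ``fewer than $a$'' (strict) per step rather than ``at most $a$,'' which you in fact note; this gives total removed $< an$ directly and avoids the extra sentence about strict versus non-strict inequalities.
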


\begin{proof}
Set $\h_0=\h$ and for $i\geq 1$ until no longer 
possible, let $\h_i$ be gotten from $\h_{i-1}$
by removing a vertex of degree 
less than $a$ 
(and the edges containing it).  
The final hypergraph is nonempty
(since we delete fewer than $an\le N(H,F)=|\h|$ 
edges) and has minimum degree at least $a$.   
\end{proof}

Fix $W$ as in \Cref{obs:min.deg} and set $R=H[W]$;
so each vertex of $R$ is contained in at least $a$ copies of $F$ in $R$. Write $\gd$ for the minimum degree in $R$ and $w$ for $|W|$ ($=v_R$). Then
\[
\EE_qX_R< n^wq^{e_R}\le \left(nq^{\gd/2}\right)^w,
\]
so we will have the desired contradiction 
$\EE_qX_R <1$ if we show 
\[q^{\gd/2}<1/n.\]
To this end, we find a suitable lower bound on $\gd$ and upper bound on $q$. 

For the first of these, 
our choice of $W$ and definition of $\gd$ give 
$a \le {\gd \choose r}< \left(\frac{e\gd}{r}\right)^r$,
so
\beq{gd.lb}\gd > ra^{1/r}/e.\enq

For an upper bound on $q$, in view of
\eqref{eq:a.def} and \eqref{eq:lb1}, we have
$ an=\EE_pX_F > \left(\frac{np^{r/2}}{r+1}\right)^{r+1}> \left(nq^{r/2}\right)^{r+1}$
(provided $L^{r/2}> r+1$), whence
\beq{q.ub} q<(a^{1/r}/n)^{2/(r+1)}.\enq

Since $R\sub H$, \Cref{prop:dense} promises
$\gd < 2\log n$, which with \eqref{gd.lb}
gives 
\beq{eq:a.ub} a^{1/r}< 2e\log n/r;\enq
and inserting this in \eqref{q.ub} 
(and again using \eqref{gd.lb}) we have
(with room)
\[
q^{\gd/2}<\left(\frac{2e\log n}{rn}\right)^{\gd/(r+1)}< \left(\frac{2e\log n}{rn}\right)^{a^{1/r}/(2e)} < 1/n,
\]
where the last inequality uses 
$a >(L/4)^{r(r+1)/2}$
(see \eqref{eq:a.def}). 

This completes the proof of 
\Cref{thm:clique} for cliques.

\section{Trees}\label{sec:trees}

Here we prove \Cref{thm:tree}.
We now use $\eps$ for what will be 
$1/L$; so $\eps$ 
\emph{is a function of $\gD=\gD_F$} and
$q= \eps p$.
We assume $\eps$ is small enough to support 
what we do, and, as usual, don't 
try to give 
it a good value.

Let $F$ be a tree, say with $e_F=j\,$ 
($\in [n-1]$), and set
\[
d=np.
\]

It will be convenient to work with \emph{labeled}
copies (a labeled copy of $J$ in $G$ being an
injection from $V(J)$ to $V(G)$ that takes edges 
to edges).
We use $\tdN(G,J)$ for the number of of labeled copies
of $J$ in $G$ and
$\EE_p\tdX_J$ for $\EE \tdN(G_{n,p},J)$. 
Then 
$N(H,F)=\tdN(H,F) / \aut(F)$ and $\EE_p X_F = \EE_p\tdX_F/\aut(F)$
(where $\aut(\cdot):=|\Aut(\cdot)|$), and 
the inequality of \Cref{thm:tree}
is the same as $\tdN(H,F) < \EE_p \tdX_F$;
so, since
\beq{EEptd}
\EE_p\tdX_F=(n)_{j+1}p^j > e^{-(j+1)}nd^j
\enq
(see \eqref{elementary}),
the theorem will follow from
\beq{goal:tree}
\tdN(H,F) \le \eps^{0.1j}nd^j
\enq
(provided $\eps < e^{-20}$),
which is what we will prove.

\subsection{Set-up and definitions.} Let $V(F)=\{v_0, \ldots, v_j\}$, where we think of $F$ rooted at $v_0$
and $(v_1\dots v_j)$ is some breadth-first order. 
Let $f_i$ be the number of children of $v_i$ (so $f_i $ is $ d_F(v_i)$ if $i=0$ 
and $ d_F(v_i) - 1$ otherwise).

Before turning to the main line of argument, 
we dispose of two easy cases.
\begin{prop}\label{cl:d.range}
The inequality in \eqref{goal:tree} holds if 
$d \le 1/(3\eps)$ or $ d \ge\eps^{-1/3}\log n$.
\end{prop}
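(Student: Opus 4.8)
\textbf{Proof proposal for \Cref{cl:d.range}.}

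The plan is to bound $\tdN(H,F)$ by a crude ``greedy embedding'' count that exploits $q$-sparseness of $H$ only through the maximum degree bound of \Cref{prop:max.deg}, and to see that this crude count already beats $\eps^{0.1j}nd^j$ in the two extreme $d$-ranges. First I would observe that a labeled copy of $F$ in $H$ can be built by choosing the image of $v_0$ (at most $n$ ways) and then, processing $v_1,\ldots,v_j$ in the given breadth-first order, choosing the image of each $v_i$ among the neighbors in $H$ of the already-placed image of its parent; this gives the trivial bound $\tdN(H,F) \le n \prod_{i=0}^{j}\gD_H^{\,f_i} = n\gD_H^{\,j}$ (using $\sum_i f_i = j$). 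By \Cref{prop:max.deg}, $\gD_H \le \max\{\log n, 2enq\} = \max\{\log n, 2e\eps d\}$.

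For the range $d \le 1/(3\eps)$: here $2e\eps d \le 2e/3 < \log n$ for large $n$, so $\gD_H \le \log n$ and $\tdN(H,F)\le n(\log n)^j$. We want $n(\log n)^j \le \eps^{0.1j}nd^j$, i.e.\ $(\log n)/d \le \eps^{0.1}$ — which is false when $d$ is tiny. So the trivial bound is too weak at the very bottom of the range, and I would instead note that when $d$ is below a constant the statement is handled by \Cref{prop:nu} or by a direct argument: if $d \le 1/(3\eps)$ then $q = \eps p = \eps d/n \le 1/(3n)$, and this is exactly the hypothesis of the ``furthermore'' clause of \Cref{MT} (with $\ga$ a suitable constant), which already gives $N(H,F) < \EE_{p'}X_F$ for $p' = q/\ga$; since the relevant statement \eqref{goal:tree} only asks for a bound in terms of $d = np$ with $\eps = 1/L$ at our disposal, choosing $L$ large enough absorbs the loss. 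Concretely, I would restate: in this range use $\nu(H,F) \le e\,\EE_qX_F$ (\Cref{prop:nu}) together with the bound that any copy of $F$ meeting a fixed copy has its root among the $v_F$ vertices of that copy, giving $N(H,F) \le v_F \cdot \gD_H^{\,j}\cdot \nu(H,F)$ — but more cleanly, since $q \le 1/(3n)$ forces $\gD_H \le \log n$ and in fact every copy is extremely constrained, the cleanest route is to invoke \Cref{MT}'s second clause directly.

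For the range $d \ge \eps^{-1/3}\log n$: now $2e\eps d$ may exceed $\log n$, so $\gD_H \le 2e\eps d$ (for $n$ large, as $d\ge \eps^{-1/3}\log n$ makes $2e\eps d \ge 2e\eps^{2/3}\log n$, which beats $\log n$ once $\eps$ is bounded — wait, $\eps$ small makes $\eps^{2/3}$ small, so this needs care: one actually gets $\gD_H \le \max\{\log n, 2e\eps d\}$ and both cases are handled below). If $\gD_H \le 2e\eps d$, then $\tdN(H,F) \le n(2e\eps d)^j = n d^j (2e\eps)^j$, and since $(2e\eps)^j \le \eps^{0.1j}$ for $\eps$ small (as $2e\eps \le \eps^{0.1}$ iff $\eps^{0.9}\le 1/(2e)$, true for small $\eps$), we are done. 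If instead $\gD_H \le \log n$, then $\tdN(H,F)\le n(\log n)^j \le nd^j(\log n/d)^j \le nd^j \eps^{j/3} \le nd^j\eps^{0.1j}$, using $d \ge \eps^{-1/3}\log n$ and $j\ge 1$.

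The main obstacle is the bottom of the small-$d$ range: the naive degree-based embedding count does not decay fast enough there, so that case genuinely needs either \Cref{MT}'s sparse clause or a more careful structural argument (e.g.\ bounding $\nu(H,F)$ via \Cref{prop:nu} and multiplying by the number of $F$-copies through a fixed edge, the latter controlled by $v_F \cdot \gD_H^{\,j-1}$). I would spell that case out carefully and treat the two sub-cases of the upper range as routine.
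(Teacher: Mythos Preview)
Your approach matches the paper's: invoke the second clause of \Cref{MT} for $d \le 1/(3\eps)$ (since then $q=\eps d/n\le 1/(3n)$), and for $d \ge \eps^{-1/3}\log n$ use the trivial embedding bound together with \Cref{prop:max.deg} (which gives $\gD_H\le \max\{\log n,2e\eps d\}\le \eps^{1/3}d$). The paper actually uses $\tdN(H,F)\le 2e_H\gD_H^{j-1}$ plus \Cref{cor:e_H} in the large-$d$ case, whereas your $\tdN(H,F)\le n\gD_H^{j}$ is a touch cleaner; your writeup is meandering (the $\nu(H,F)$ detour is unnecessary) but the argument is correct.
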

\begin{proof}
The assertion for $d \le 1/(3\eps)$
follows from (the second part of) \Cref{MT}.
(In more detail:  
assume $\eps < \ga^{10/9}$ with $\ga$ as
in the theorem, and let $p'=q/\ga$; so 
$\ga p'=q \le 1/(3n)$ 
and $p' < \eps^{0.1}p$,
implying $N(H,F)< \E_{p'}X_F
< \eps^{0.1j}\E_pX_F$.)

If, on the other hand, 
$d \ge \eps^{-1/3}\log n$, then 
\Cref{prop:max.deg} gives
\[
\gD_H \le \max\{\log n, 2enq\} \le \max\{\eps^{1/3}d,2e\eps d\}=\eps^{1/3}d ,
\]
whence
\[
\tdN(H,F)
\le 2e_H \gD_H^{j-1} \le 2n\log n \cdot (\eps^{1/3}d)^{j-1} \le 2\eps^{j/3}nd^j
\]
(where 
$2e_H$ bounds the number of embeddings of $v_0v_1$,
$\gD_H^{j-1}$ bounds the number of ways to 
extend to the rest of $F$, 
and the second inequality uses \Cref{cor:e_H});
so we have \eqref{goal:tree}. 
\end{proof}

So we assume from now on that
\beq{WMAd}
1/(3\eps) < d <\eps^{-1/3}\log n.
\enq

\nin
\emph{Remark.}
With small modifications, the following argument
goes through without the lower bound in \eqref{WMAd}, 
and, in cases where $q < 1/n$, 
without the bounded degree assumption in \Cref{thm:tree}.
In particular,
since for $q<1/n$ a $q$-robust $H$ is acyclic,
this gives an alternate 
proof of a \emph{slight} strengthening of
the second part of \Cref{MT} 
(namely, replacing $q<1/(3n)$ by $q<1/n$),
which, strangely, we don't see how to
squeeze out of the argument in
\cite{dubroff2025second}.

\begin{mydef}[Legal degree sequence]
Say $\underline d=(d_0, \ldots, d_{j})$ 
is \textit{legal} if for all 
$i \in [0, j]$,
\[
\mbox{either 
$d_i \ge \sqrt \eps d$ ($i$ is \textit{big}) or $d_i=f_i$ ($i$ is \textit{small}).}
\]
\end{mydef}
Note that
\beq{figD}
f_i \leq \gD < \sqrt \eps d \,\,\,\forall i,
\enq
since \eqref{WMAd} gives
$\sqrt \eps d > 1/(3 \sqrt \eps)$, 
which we may assume is
greater than $\gD$; 
so ``big" and ``small" do not overlap.
\emph{From now on $\ud$ is always a legal degree sequence.}

\begin{mydef}[Partially labeled $R$]
\label{Rdef}
For a legal $\underline d$, define $\cR_{\underline{d}}$ to be the set of partially labeled graphs $R$ that consist of 
\begin{enumerate}[(i)]
    \item $F$ (with its labels) plus
    \item for each $i \in [0, j]$, $d_i$ edges joining $v_i$ to vertices not in $\{v_0, \ldots, v_{i-1}\}$
    (which may still be in $F$ but should be thought of as mostly new); vertices of $R\sm F$ are unlabeled.
\end{enumerate}
\nin
A \emph{copy} (in $H$) of such an $R$ is then partially labeled in the same way.
\end{mydef}

Set $\cR=\cup \cR_{\underline{d}}$. 
We use $\hat R$ for a copy of $R$, 
$\hat F$ for a (\emph{labeled}) copy of $F$,  
$\hat \cR_{\underline{d}}$ for the set of 
copies of $R$'s in 
$\cR_{\ud}$,
and $\hat{\cR}=\cup \hat{\cR}_{\ud}$.
We write 
$\hat R \sim \hat F$ if $\hat F$ is the "$F$-part" of $\hat R$.

\begin{mydef}[Fit]
For $\hat R \sub H$ a copy of $R \in \cR_\ud$, with $w_i \in V(\hat R)$ the copy of $v_i$, say 
$\hat R$ \emph{fits} $H$ if, for all 
$i \in [0,j]$, 
\beq{fit}
|N_H(w_i) \setminus \{w_0, \ldots, w_{i-1}\}|\,\,
\left\{\begin{array}{cl}
=d_i  & \text{if $i$ is big,}\\
 < \sqrt \eps d & \text{if $i$ is small.}
\end{array}\right.
\enq
\end{mydef}

\begin{obs}\label{obs.fit}
For each labeled $\hat F \sub H$, there is
a unique $\hat R \in \cR$ such that 
$\hat R \sim \hat F$ and $\hat R$ fits $H$.
\end{obs}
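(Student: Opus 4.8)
The plan is to read a canonical $\hat R$ off of $\hat F$ and then show this $\hat R$ is forced. Write $w_0,\ldots,w_j$ for the vertices of $\hat F$ (the copies of $v_0,\ldots,v_j$), and for $i\in[0,j]$ set $D_i:=|N_H(w_i)\setminus\{w_0,\ldots,w_{i-1}\}|$, which depends only on $H$ and $\hat F$. Because $(v_1,\ldots,v_j)$ is a breadth-first order, each child of $v_i$ has index greater than $i$; since $\hat F\subseteq H$ these children contribute $f_i$ distinct vertices to $N_H(w_i)\setminus\{w_0,\ldots,w_{i-1}\}$, so $D_i\ge f_i$ for all $i$.

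For existence I would declare $i$ to be \emph{big}, with $d_i:=D_i$, when $D_i\ge\sqrt\eps d$, and \emph{small}, with $d_i:=f_i$, otherwise; the resulting $\ud$ is legal by construction, and big and small are disjoint by \eqref{figD}. Then take $\hat R$ to be $\hat F$ together with, for every big $i$, all edges from $w_i$ to $N_H(w_i)\setminus\{w_0,\ldots,w_{i-1}\}$ (at small $i$ nothing new is added). One checks that $\hat R$ is a copy of an $R\in\cR_\ud$ — at a big $i$ the edges at $w_i$ consist of the $f_i$ children-edges plus $d_i-f_i$ further edges, as allowed by \Cref{Rdef} — and that it fits $H$: the left side of \eqref{fit} equals $D_i=d_i$ for big $i$ and is $D_i<\sqrt\eps d$ for small $i$.

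For uniqueness, let $\hat R\sim\hat F$ be a copy of some $R\in\cR_{\ud}$ that fits $H$, and for each $i$ let $S_i$ be the set of neighbors of $w_i$ in $\hat R$ lying outside $\{w_0,\ldots,w_{i-1}\}$; then $S_i\subseteq N_H(w_i)\setminus\{w_0,\ldots,w_{i-1}\}$, $|S_i|=d_i$, and $S_i$ contains the $f_i$ children-copies of $v_i$. If $i$ is big then \eqref{fit} forces $|N_H(w_i)\setminus\{w_0,\ldots,w_{i-1}\}|=d_i=|S_i|$, so $S_i=N_H(w_i)\setminus\{w_0,\ldots,w_{i-1}\}$ and $D_i=d_i\ge\sqrt\eps d$; if $i$ is small then $|S_i|=d_i=f_i$ forces $S_i$ to be exactly the children-copies, and \eqref{fit} gives $D_i<\sqrt\eps d$. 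Hence whether $i$ is big or small is dictated by the sign of $D_i-\sqrt\eps d$, which then pins down $d_i$ and $S_i$; and since $V(\hat R)=V(\hat F)\cup\bigcup_i S_i$ and $E(\hat R)=E(\hat F)\cup\bigcup_i\{w_i s:s\in S_i\}$, the copy $\hat R$ is exactly the one built above.

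I would not expect a genuine obstacle — this is bookkeeping — but the point that needs care is reconciling the canonical $\hat R$ with \Cref{Rdef}: one must keep in mind that the $d_i$ edges of part (ii) already include the $f_i$ edges of $F$ from $v_i$ to its children (so that ``small'' really does mean ``no new edges at $v_i$'', and $|S_i|=d_i$ holds), and that the remaining $d_i-f_i$ edges may land on later $v_k$'s as well as on fresh unlabeled vertices. With this convention in place, both ``$\hat R\in\hat{\cR}$'' and the forcing step above are routine.
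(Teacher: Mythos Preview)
Your proposal is correct and takes essentially the same approach as the paper: define $D_i=|N_H(w_i)\setminus\{w_0,\ldots,w_{i-1}\}|$, declare $i$ big or small according to whether $D_i\ge\sqrt\eps d$, and let $\hat R$ consist of $\hat F$ together with all edges from $w_i$ into $N_H(w_i)\setminus\{w_0,\ldots,w_{i-1}\}$ at big $i$. The paper compresses this to a one-line parenthetical construction and leaves uniqueness implicit, whereas you write out both directions and (helpfully) make explicit the convention that the $d_i$ edges in \Cref{Rdef} include the $f_i$ child-edges; that convention is indeed the intended reading and your bookkeeping is sound.
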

\nin
(With $w_i$ the copy of $v_i$ in $\hat F$, 
the desired $\hat R$ consists of $\hat F$ plus
all edges $w_iu$ with 
$u\in N_H(w_i) \setminus \{w_0, \ldots, w_{i-1}\}$
and 
$|N_H(w_i) \setminus \{w_0, \ldots, w_{i-1}\}|\ge 
\sqrt \eps d$ 
(and the vertices in these edges).)

\mn

For $R \in \cR$, let $N^*(H, R)$ 
be the number of 
copies of $R$ that fit $H$. By \Cref{obs.fit},
\beq{N*bd}
\tdN(H,F) = \sum_{R \in \cR} N^*(H, R).
\enq

\nin
\textbf{Plan.}
We will give two upper bounds on 
$\sum_{R \in \cR_{\underline{d}}} N^*(H,R)$
and show that, for each 
$\ud$, 
one of these is small.  Which bound we use will depend
on how 
\beq{D(d)}
D(\ud):=\sum_{\text{$i$ big}} d_i
\enq
compares
to $j\log d$, but in either case will be small enough relative to the bound of \eqref{goal:tree} that even summing
over $\ud$ causes no trouble.

We conclude this section by 
showing that the cost of "decomposing" $D$ is 
small.

\begin{prop}\label{partition.cost} 
For any D 
the number of $\ud$'s with $D(\ud)=D$ is 
\beq{count.bd}
\begin{array}{ll}
\exp\left[O\left(j\log^2 d/(\sqrt{\eps} d)
\right)\right] & \text{ if } D \leq j \log d,\\
\exp\left[O\left(D\log d/(\sqrt{\eps} d)
\right)\right] & \text{ if } D > j \log d.
\end{array}
\enq
\end{prop}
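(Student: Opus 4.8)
The plan is to count legal degree sequences directly. Since a small coordinate of $\ud$ is forced ($d_i=f_i$), a legal $\ud$ with $D(\ud)=D$ is pinned down by its set $B\subseteq[0,j]$ of big coordinates — write $b=|B|$ — together with the values $(d_i)_{i\in B}$, which are positive integers, each at least $\sqrt\eps d$, summing to $D$. In particular $b\le D/(\sqrt\eps d)$, and there is nothing to prove unless $D\ge\sqrt\eps d$ (no legal $\ud$ has $0<D(\ud)<\sqrt\eps d$, while exactly one has $D(\ud)=0$). For a fixed $b$ there are $\binom{j+1}{b}$ choices of $B$ and at most $\binom{D-1}{b-1}\le\binom{D}{b}$ value-tuples (the latter being the number of compositions of $D$ into $b$ positive parts), so
\[
\#\{\ud:D(\ud)=D\}\ \le\ \Bigl(\tfrac{D}{\sqrt\eps d}+1\Bigr)\max_{\,0\le b\le D/(\sqrt\eps d)}\binom{j+1}{b}\binom{D}{b}.
\]

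Next I would bound the maximum using $\binom{m}{b}\le(em/b)^b$ and the elementary fact that $x\mapsto x\log(em/x)$ is nondecreasing on $(0,m]$ (its derivative is $\log(m/x)$). This gives $\binom{j+1}{b}\binom{D}{b}\le\bigl(e^2(j+1)D/b^2\bigr)^b$, a quantity which, viewed at integer $b$, rises with $b$ until $b$ is of order $\sqrt{(j+1)D}$ and falls thereafter (vanishing once $b>j+1$). So, writing $t:=D/(\sqrt\eps d)$, its maximum over $b\le t$ equals
\[
\exp\!\Bigl[O\bigl(t\log\tfrac{(j+1)\eps d^2}{D}\bigr)\Bigr]\quad\text{or}\quad\exp\bigl[O(\sqrt{(j+1)D})\bigr]
\]
according as $t\le\sqrt{(j+1)D}$ or not, the latter arising only when $D\ge(j+1)\eps d^2$. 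The prefactor $t+1$ merely adds an $O(\log t)$ to the exponent, which is negligible.

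It remains to feed in \eqref{WMAd} — which yields $\eps d^2>d/3$, hence $\eps d^2\gg\log d$, so that $D\le j\log d$ automatically puts us in the first case above — and to check that each of the two displayed exponents is $O\!\bigl(j\log^2 d/(\sqrt\eps d)\bigr)$ when $D\le j\log d$ and $O\!\bigl(D\log d/(\sqrt\eps d)\bigr)$ when $D>j\log d$. In the $\sqrt{(j+1)D}$ case this is immediate, since there $D\ge(j+1)\eps d^2$ forces $\sqrt{(j+1)D}\le D/(\sqrt\eps d)$; in the first case one uses $D\ge\sqrt\eps d$ to bound $\log\tfrac{(j+1)\eps d^2}{D}\le\log\!\bigl((j+1)\sqrt\eps d\bigr)$ and compares against $j\log d$ or against $D$ according to the regime, so that the logarithmic factors collapse to $\log^2 d$ or to $\log d$. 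I expect this last step to be the only real work: one must keep straight which of the two $b$-regimes is in force (governed by whether $D<(j+1)\eps d^2$) while the split $D\le j\log d$ versus $D>j\log d$ runs in parallel, so that nothing spurious survives in the exponent. It is routine, modulo absorbing the customary harmless additive $O(1)$ in the exponent (visible already for $D=\sqrt\eps d$, where the count is exactly $j+1$).
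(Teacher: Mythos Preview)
Your approach is essentially the paper's: pick the set $B$ of big indices (say $|B|=b$), then a composition of $D$ into $b$ parts, bound $\binom{j+1}{b}\binom{D}{b}\le (e^2(j+1)D/b^2)^b$, and maximize over $b\le t:=D/(\sqrt{\eps}\,d)$. (The paper instead caps at $s_0=\min\{j,D/(\sqrt{\eps}\,d)\}$ and so splits the cases a bit differently, but this is cosmetic.)

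The slip is in the last paragraph. Bounding the logarithm via $D\ge\sqrt{\eps}\,d$ leaves you with $t\log\bigl((j+1)\sqrt{\eps}\,d\bigr)$, and $\log(j+1)$ is \emph{not} $O(\log d)$: under \eqref{WMAd}, $d$ may be merely a large constant (of order $1/\eps$) while $j$ can be as large as $n-1$. So at $D=j\log d$, say, your exponent reads $(j\log d/(\sqrt{\eps}\,d))\bigl(\log(j+1)+O(\log d)\bigr)$, which overshoots the target $O\bigl(j\log^2 d/(\sqrt{\eps}\,d)\bigr)$ by a factor $\log(j+1)/\log d$---potentially of order $\log n$. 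The same problem recurs in the regime $D>j\log d$.

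The repair is not to bound the two factors of $t\log\bigl((j+1)\eps d^2/D\bigr)$ separately but to let the $j$'s cancel. For $D\le j\log d$, apply the very monotonicity fact you already state---$x\mapsto x\log(\ga/x)$ increasing on $(0,\ga/e]$---with $x=D$ and $\ga=e^2(j+1)\eps d^2$, and evaluate at the endpoint $D=j\log d$; the argument of the log becomes $e^2(j+1)\eps d^2/(j\log d)\le 2e^2\eps d^2/\log d$, and the exponent collapses to $O\bigl(j\log^2 d/(\sqrt{\eps}\,d)\bigr)$ as claimed. For $D>j\log d$, use $D>j\log d$ \emph{inside} the logarithm (rather than $D\ge\sqrt{\eps}\,d$) to get the same cancellation. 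This is exactly what the paper does.
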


\begin{proof} 
The number of big $i$'s for a $\ud$
with $D(\ud)=D$ is at most
\[
s_0 := \min\{j, D/(\sqrt \eps d)\} < 
\sqrt{jD/2}
\]

\nin
(see \eqref{WMAd}),
and the number of such $\ud$'s 
with exactly $s$ big $i$'s is less than 
\beq{si's}
\C{j}{s}\C{D-1}{s-1} < \exp_2[s\log (e^2jD/s^2)];
\enq
so (since the r.h.s.\ of \eqref{si's}
increases rapidly with $s$), 
the number of $\ud$'s in the proposition is less than
\beq{0th.bd}
\sum_{s \le s_0}
\exp_2[s\log (e^2jD/s^2)]
< 2
\exp_2\left[s_0\log (e^2jD/s_0^2)\right],
\enq
which is
\begin{eqnarray}
2 \exp_2\left[D/(\sqrt{\eps} d)\log (e^2j\eps d^2/D)\right] 
& \text{if $j > D/(\sqrt{\eps}d)$,}
\label{1st.bd}\\
2\exp_2\left[j\log (e^2D/j)\right] & \text{if $j \leq D/(\sqrt{\eps} d)$}.
\label{2nd.bd}
\end{eqnarray}

Now for \eqref{count.bd}: 
If $D \leq j \log d$, then \eqref{1st.bd}
applies (since $d$ is somewhat large; see \eqref{WMAd}), so the bound in 
\eqref{0th.bd} is at most
\[
2 \exp_2\left[j \log d/(\sqrt{\eps} d)\log (e^2\eps d^2/\log d)\right] = \exp\left[O\left(j \log^2 d/(\sqrt{\eps} d)\right)\right]
\]
(using the fact that $x\log(\ga/x)$ is 
increasing in $x$ up to $\ga/e$).
And if $D > j \log d$, then:
if $j \leq D/(\sqrt \eps d)$ then
the version \eqref{2nd.bd}
of the bound in \eqref{0th.bd} is at most
\[
\exp\left[O\left(D/(\sqrt \eps d)\log (e^2 \sqrt \eps d)\right)\right] = \exp\left[O\left(D\log d/(\sqrt{\eps} d)\right)\right];
\]
and otherwise we use
$j < D/\log d$ to say the bound in
\eqref{1st.bd} is less than
\[
2 \exp_2\left[D/(\sqrt{\eps} d)\log (e^2\eps d^2/\log d)\right]
= \exp\left[O\left(D\log d/(\sqrt{\eps} d)\right)\right].\qedhere
\]
\end{proof}

\subsection{First bound.}  The goal of this section is to show 
\beq{1st.goal} 
\sum_{D(\ud) \le j\log d}\,\sum_{R \in \cR_\ud}N^*(H,R)
< n (\eps^{1/3} d)^j.
\enq
We first bound the inner sums and then
invoke \Cref{partition.cost}.

\begin{prop}\label{cl:bd1} For any $\ud$,
\beq{first.bd} 
\sum_{R \in \cR_\ud}N^*(H,R) \le    
n \prod_{\text{$i$ small}}(\sqrt \eps d)^{f_i}
\prod_{\text{$i$ big}} d_i^{f_i}.
\enq
\end{prop}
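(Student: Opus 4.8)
The plan is to bound $\sum_{R \in \cR_\ud} N^*(H,R)$ by building a fitting copy $\hat R$ greedily along the breadth-first order $v_0, v_1, \ldots, v_j$, choosing at step $i$ both the image $w_i$ of $v_i$ and the $d_i$ extra neighbors of $w_i$ prescribed by Definition \ref{Rdef}(ii). The point is that, once we fix a fitting copy, each vertex $v_i$ (other than the root) already has its image $w_i$ determined as soon as we have placed its parent and chosen that parent's children; and the $f_i$ children of $v_i$ together with the $d_i - f_i$ (if $i$ is big) or $0$ (if $i$ is small, where $d_i = f_i$) additional leaves attached to $w_i$ all have to lie in $N_H(w_i)\setminus\{w_0,\ldots,w_{i-1}\}$.

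So I would count as follows. There are $n$ choices for $w_0$. Then, processing $i = 0, 1, \ldots, j$ in order, we must choose, from $N_H(w_i)\setminus\{w_0,\ldots,w_{i-1}\}$, the images of the $f_i$ children of $v_i$ together with the other $d_i - f_i$ "new" neighbors. Because $\hat R$ is required to \emph{fit} $H$, the set $N_H(w_i)\setminus\{w_0,\ldots,w_{i-1}\}$ has size exactly $d_i$ when $i$ is big and size $< \sqrt\eps d$ when $i$ is small; moreover when $i$ is big we must use \emph{all} of that set (since $d_i$ elements must be chosen from a set of size exactly $d_i$), so the only freedom is in deciding which $f_i$ of those $d_i$ vertices are the children — at most $\binom{d_i}{f_i} \le d_i^{f_i}$ ways — while when $i$ is small, $d_i = f_i$ and we are choosing an ordered $f_i$-tuple of children from a set of size $< \sqrt\eps d$, at most $(\sqrt\eps d)^{f_i}$ ways. (One should be slightly careful that the "new" vertices at $w_i$ may coincide with vertices already placed, as Definition \ref{Rdef}(ii) allows; but this only decreases the count, so the crude bounds $d_i^{f_i}$ and $(\sqrt\eps d)^{f_i}$ still dominate. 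Also, the count of fitting copies of $R$'s in $\cR_\ud$ is exactly the number of outcomes of this procedure, since by \Cref{obs.fit} the fitting $\hat R$ over a given $\hat F$ is unique, so there is no overcounting across different $R \in \cR_\ud$.) Multiplying the per-step bounds over all $i$ and the initial factor $n$ gives exactly the right-hand side of \eqref{first.bd}.

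The main thing to get right is the bookkeeping in the previous paragraph — specifically the observation that for big $i$ the fitting condition \eqref{fit} pins down the neighbor set completely, converting what looks like a choice of $d_i$ vertices out of $\sim n$ into a mere choice of which $f_i$ are children, and that for small $i$ the $\sqrt\eps d$ cap in \eqref{fit} is exactly what bounds the branching. The rest is routine: no subgraph-counting or union bound is needed here, just the greedy embedding and the definition of "fits." I do not expect any genuine obstacle; the subtlety, if any, is purely in making sure the bound is charged to the correct vertex $w_i$ (the parent) rather than to the children, so that the exponent on each factor is $f_i$ and the product telescopes to $\prod_{i \text{ small}}(\sqrt\eps d)^{f_i}\prod_{i \text{ big}} d_i^{f_i}$ as claimed.
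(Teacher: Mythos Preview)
Your approach is essentially the same as the paper's: both bound the number of labeled $\hat F$'s whose unique fitting $\hat R$ (via \Cref{obs.fit}) lies in $\cR_\ud$, by choosing $w_0,\ldots,w_j$ greedily and using the fitting condition \eqref{fit} to cap the branching at each $w_i$. The paper phrases this as ``choose the $f_i$ children of $w_i$ from $N_H(w_i)\setminus\{w_0,\ldots,w_{i-1}\}$,'' which is exactly your observation that for big $i$ the extra $d_i-f_i$ neighbors are forced.

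One small slip: for big $i$ you write ``at most $\binom{d_i}{f_i}\le d_i^{f_i}$ ways,'' but the children are \emph{labeled} (they are specific $v_j$'s in $F$), so the correct count is $(d_i)_{f_i}$, not $\binom{d_i}{f_i}$; you get this right in the small case (``ordered $f_i$-tuple''). Since $(d_i)_{f_i}\le d_i^{f_i}$ anyway, your final bound is unaffected.
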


\begin{proof}
This is just the naive bound on the number of 
$\hat F$'s for which the unique 
$\hat R\sim\hat F$ that fits $H$
(see \Cref{obs.fit}) is in $\R_\ud$.
With $w_i$ again the copy of $v_i$ in $\hat F$,
we choose $w_0\dots w_j$ in turn.
The number of choices for $w_0 $  is 
at most $n$, and, since 
$\hat R$ is in $ \R_\ud$ and fits $H$, 
the number of choices for the children of $w_i$  
(which are all chosen with 
$(w_0\dots w_i)$ known) is
at most 
\[
(|N_H(w_i) \sm \{w_0, \ldots, w_{i-1}\}|)_{f_i},
\]
which with \eqref{fit} gives \eqref{first.bd}.
\end{proof}

\begin{prop}\label{two.products}
If 
\beq{D.ub}
D:=
D(\ud) \le j\log d,
\enq
then
\beq{prodprod}
\prod_{\text{$i$ small}}(\sqrt \eps d)^{f_i}\prod_{\text{$i$ big}}d_i^{f_i} \le 
(\sqrt \eps d)^j
e^{(\dd j\log^2 d)/(\sqrt \eps d)}.
\enq
\end{prop}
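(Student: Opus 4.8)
The plan is to take logarithms and compare the "big" and "small" contributions separately against the target. Taking $\log$ (base $2$) of the left side of \eqref{prodprod}, I want to show
\[
\sum_{\text{$i$ small}} f_i\log(\sqrt\eps d) + \sum_{\text{$i$ big}} f_i\log d_i
\;\le\; j\log(\sqrt\eps d) + \frac{\dd j\log^2 d}{\sqrt\eps d}.
\]
Since $\sum_i f_i = j$ (the number of edges of $F$, as each non-root vertex is the child of exactly one vertex), the "small" sum already contributes at most $j\log(\sqrt\eps d)$ on its own; so it suffices to show that the \emph{excess} coming from big vertices, namely
\[
\sum_{\text{$i$ big}} f_i\bigl(\log d_i - \log(\sqrt\eps d)\bigr),
\]
is at most $\dd j\log^2 d/(\sqrt\eps d)$. (One must be slightly careful that for small $i$ one is replacing $f_i\log(\sqrt\eps d)$ by exactly that, and for big $i$ one replaces the true summand; writing the left side as $j\log(\sqrt\eps d) + \sum_{\text{$i$ big}} f_i(\log d_i-\log(\sqrt\eps d))$ makes the bookkeeping transparent.)

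The next step is to bound each term of the big-vertex sum crudely. For big $i$ we have $f_i\le\dd$ (the degree bound), and $\log d_i - \log(\sqrt\eps d)\le \log d_i\le \log d$ since $d_i$ is a degree in $H$ and hence (by \Cref{prop:max.deg}, using $d_i\le\gD_H\le 2enq\le d$, or more simply $d_i \le n \le d^{O(1)}$ after absorbing constants — actually $d_i\le\gD_H$ and \eqref{WMAd} plus \Cref{prop:max.deg} give $d_i\le\eps^{1/3}d< d$) we get $\log d_i<\log d$. Hence
\[
\sum_{\text{$i$ big}} f_i\bigl(\log d_i-\log(\sqrt\eps d)\bigr) \;\le\; \dd\log d\cdot\#\{\text{big }i\}.
\]
Now I invoke the hypothesis \eqref{D.ub}: each big $i$ has $d_i\ge\sqrt\eps d$, so $\#\{\text{big }i\}\le D(\ud)/(\sqrt\eps d)\le j\log d/(\sqrt\eps d)$. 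Combining, the excess is at most $\dd\log d\cdot j\log d/(\sqrt\eps d) = \dd j\log^2 d/(\sqrt\eps d)$, which is exactly what is needed; exponentiating (back to base $2$, noting $\log$ here is $\log_2$ while the target has $e^{(\cdots)}$, so the constant $\dd$ absorbs a harmless $\ln 2$ factor — or one just writes $2^{\dd'(\cdots)}\le e^{\dd(\cdots)}$) gives \eqref{prodprod}.

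I expect no real obstacle here: the only things to watch are (i) the identity $\sum_i f_i = j$, which is immediate since $F$ is a tree on $j+1$ vertices and every vertex except the root is counted once as a child; (ii) the bound $d_i < d$ for big $i$, for which one should point precisely at \Cref{prop:max.deg} together with \eqref{WMAd} to get $\gD_H\le\eps^{1/3}d$ (so in fact $\log d_i$ could even be bounded by $\log d + O(1)$, harmlessly); and (iii) the base-of-logarithm bookkeeping between $\log_2$ and $e$ in passing from the additive to the multiplicative form, which only affects the implicit constant hidden in the displayed $\dd$. All of these are routine, so the proof is short.
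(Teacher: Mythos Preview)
Your overall strategy---factor out $(\sqrt\eps d)^j$ and bound the ``excess'' $\sum_{\text{big}} f_i\log(d_i/\sqrt\eps d)$ using $f_i\le\dd$ and $\#\{\text{big }i\}\le D/(\sqrt\eps d)\le j\log d/(\sqrt\eps d)$---matches the paper's. The gap is the step $\log d_i\le\log d$. None of your three justifications works: the bound $\gD_H\le 2enq$ from \Cref{prop:max.deg} requires $q\ge\log n/n$, but \eqref{WMAd} gives $nq=\eps d<\eps^{2/3}\log n<\log n$; the claim $n\le d^{O(1)}$ is false since $d$ can be a constant (e.g.\ $d\sim 1/\eps$) while $n\to\infty$; and the bound $\gD_H\le\eps^{1/3}d$ appears in the proof of \Cref{cl:d.range} only under the hypothesis $d\ge\eps^{-1/3}\log n$, which is precisely the \emph{complement} of \eqref{WMAd}. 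In the present regime $\gD_H$ is only bounded by $\log n$, which can exceed $d$---and in any case the proposition is stated for arbitrary legal $\ud$, with no reference to $H$, so an individual $d_i$ can be as large as $D$.

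The fix (which is what the paper does) is to avoid a pointwise bound on $d_i$ altogether: by AM--GM, $\prod_{\text{big}}d_i\le(D/s)^s$, and since $x\mapsto(D/x)^x$ is increasing for $x<D/e$ while $s\le D/(\sqrt\eps d)<D/e$, one gets $\prod_{\text{big}}d_i^{\dd}\le(\sqrt\eps d)^{\dd D/(\sqrt\eps d)}$, from which \eqref{prodprod} follows using $D\le j\log d$ and $\ln(\sqrt\eps d)<\log d$. Equivalently, in your additive language, replace the per-term bound $\log d_i\le\log d$ by the concavity bound $\sum_{\text{big}}\log d_i\le s\log(D/s)$.
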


\begin{proof} 
Since $\sum f_i=j$, 
the first product is less than
$(\sqrt \eps d)^j$.
For the second, with $s$ the number of 
big $i$'s, we have
$s \le D/(\sqrt \eps d) ~(<D/e),$
whence (using \eqref{D.ub} 
for the last inequality)
\[
\prod_{\text{$i$ big}}d_i^{f_i}\le
\prod_{\text{$i$ big}} d_i^{\dd} \le (D/s)^{s\dd}
\le (\sqrt \eps d)^{\dd D/(\sqrt \eps d)}\le
e^{(\dd j\log^2 d)/(\sqrt \eps d)}.\qedhere
\]
\end{proof}

Finally, inserting \eqref{prodprod}
in \eqref{first.bd} and using
\Cref{partition.cost},
we find that the l.h.s.\ of \eqref{1st.goal} 
is at most 
\[
\left\{(j\log d)\cdot
\exp\left[(\gD+O(1))j\log^2 d/(\sqrt \eps d)\right]
\right\}\cdot n\cdot  ( \sqrt{\eps} d)^j 
< n ( \eps^{1/3} d)^j
\]
Here the inequality holds because, since
$d > 1/(3\eps)$ (see \eqref{WMAd}), 
the expression in $\{\,\}$'s is much smaller
than $\eps^{-j/6}$ for a small enough 
$\eps $ ($=\eps(\gD)$).

\subsection{Second bound.}   
Here we have more room and will show 
\beq{2nd.goal} 
\sum_{D(\ud) > j\log d}\,\sum_{R \in \cR_\ud}N^*(H,R) \le  n \eps^{(j/3)\log d}. 
\enq
\nin
(What we say here
applies to \emph{any} $\ud$
until we get to the end of the section,
where we finally use $D(\ud) > j\log d$.)

For this discussion 
$R$ is always in some
$\R_\ud$, so, as in \Cref{Rdef},
copies of $R$ are partially labeled; with this understanding, we 
again use $N(G,R)$ for the number of copies of
$R$ in $G$, $\E_pX_R$ for $\E N(G_{n,p},R)$, and 
$\nu(H,R)$ for 
the maximum size of an 
edge-disjoint collection of copies of $R$ in $H$.

Like the treatment of small $q$ in 
\Cref{sec:clique}, the proof of 
\eqref{2nd.goal} 
uses the approach previewed following 
\Cref{prop:nu}; thus we hope for a bound
on the inner sum in \eqref{2nd.goal}
of the form
\beq{form}
\sum_{R \in \cR_\ud} \nu(H,R) \cdot B(R),
\enq
where $B(R)$ is some bound on the number of copies
of $R$ that fit $H$ and share edges with a given copy.
(Here:  (i) we would be entitled to insist that in
the definition of $\nu(H,R)$ we restrict to copies of
$R$ that fit $H$, but we don't need---and anyway don't know how to 
use---this; (ii) rather 
than $B(R)$, we will use a single bound, $\gb(\ud)$, 
on the number of \emph{all} copies of 
$R$'s in $\R_\ud$ 
that fit $H$ and share edges with a given 
copy---though a bound on the number of such
copies of a \emph{single} $R$ could in principle 
be much smaller.)

\mn

We observe that \Cref{prop:nu}
trivially 
(and with some sacrifice) extends to copies of
$R$:
\begin{cor}\label{cor.nu}
If $H$ is $q$-sparse, then for any $R\in \R$, 
$\nu(H,R) \le e\EE_qX_R.$
\end{cor}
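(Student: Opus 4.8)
The plan is to transcribe the proof of \Cref{prop:nu} almost verbatim, the only genuinely new point being that the objects in $\R$ carry a partial labeling, which I would handle by simply discarding it.

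First I would fix $R\in\R$ and argue by contradiction: suppose $H$ contains edge-disjoint copies $\hat R_1,\dots,\hat R_\nu$ of $R$ with $\nu>e\EE_qX_R$. Let $S=\bigcup_t \hat R_t$, viewed as an ordinary (unlabeled) subgraph of $H$. Then $e_S=\nu\,e_R$ by edge-disjointness, and $q$-sparseness of $H$, applied to $S\sub H$, gives $\EE_qX_S\ge 1$.

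Next I would bound $N(K_n,S)$ from above. The stored decomposition $S=\hat R_1\cup\cdots\cup\hat R_\nu$ shows that any copy of the graph $S$ in $K_n$ is a union of $\nu$ edge-disjoint unlabeled copies of the underlying graph of $R$, and is determined by that $\nu$-set; since the number of unlabeled copies of $R$ in $K_n$ is at most the number of partially labeled ones, i.e.\ at most $N(K_n,R)$, this yields $N(K_n,S)\le\binom{N(K_n,R)}{\nu}$. The computation then runs exactly as in \Cref{prop:nu}:
\[
\EE_qX_S=N(K_n,S)\,q^{e_S}\le\binom{N(K_n,R)}{\nu}q^{\nu e_R}<\left(\frac{eN(K_n,R)q^{e_R}}{\nu}\right)^{\nu}=\left(\frac{e\EE_qX_R}{\nu}\right)^{\nu}<1,
\]
contradicting $\EE_qX_S\ge 1$; hence $\nu(H,R)\le e\EE_qX_R$.

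The only step that is not literally a copy of \Cref{prop:nu} is the passage from partially labeled copies to plain subgraphs in the bound on $N(K_n,S)$ — and this is exactly the "sacrifice" flagged in the text (we may overcount by replacing the unlabeled count of copies of $R$ by the partially labeled count $N(K_n,R)$, which is harmless in every later use). I do not expect any real obstacle here.
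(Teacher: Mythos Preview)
Your argument is correct and is essentially the paper's approach---discard the partial labels and use the \Cref{prop:nu} machinery---but you rerun the proof of \Cref{prop:nu} inside the argument, whereas the paper just applies it as a black box: letting $S$ be the unlabeled graph underlying $R$, one has $\nu(H,R)=\nu(H,S)\le e\,\EE_qX_S\le e\,\EE_qX_R$. The ``sacrifice'' you identify is exactly the final inequality $\EE_qX_S\le \EE_qX_R$.
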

\begin{proof}
With $S$ the unlabeled graph underlying $R$, 
we have (using \Cref{prop:nu})
\[
\nu(H,R)=\nu(H,S) \le e\E_qX_S\leq e\E_qX_R.\qedhere
\]
\end{proof}

\begin{lem} \label{Lnubd}
For any $\ud$,
\beq{bd1.1'}
\sum_{R \in \cR_\ud} \nu(H,R) \,<\,
e n \prod_{\text{$i$ small}} (\eps d)^{f_i} \prod_{i \text{ big}} \left[\left(\frac{e\eps d}{d_i}\right)^{d_i} d_i^{f_i}\right]\,=:\,\ga(\ud).
\enq
\end{lem}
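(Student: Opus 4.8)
The plan is to bound the sum $\sum_{R \in \cR_\ud} \nu(H,R)$ by applying \Cref{cor.nu} termwise, which gives
\[
\sum_{R \in \cR_\ud} \nu(H,R) \le e \sum_{R \in \cR_\ud} \EE_q X_R,
\]
and then to compute $\sum_{R \in \cR_\ud} \EE_q X_R$ directly via the ``union of $F$ with extra edges'' structure of \Cref{Rdef}. The key point is that a copy of some $R \in \cR_\ud$ in $K_n$ (with its partial labeling) is built in two stages: first an embedding of the labeled $F$, and then, for each $i$, a choice of $d_i$ edges from $w_i$ to vertices outside $\{w_0, \ldots, w_{i-1}\}$, with the $f_i$ edges of $F$ among them already spoken for. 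Summing over all embeddings of $F$ and all ways to add the extra edges, and weighting by $q^{e_R}$, should essentially factor as a product over $i$.

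Concretely, I would write $\EE_q X_R$ for each fixed $R$ as $(n)_{j+1} q^j$ (the labeled-$F$ part) times a product over $i$ of ``number of ways to extend by $d_i$ edges at $v_i$'' times $q^{d_i - f_i}$ (the extra edges beyond the $f_i$ edges of $F$ already present). Then summing over $R \in \cR_\ud$ replaces, for each $i$, the extension count by $\binom{n}{d_i - f_i}$ or so — crudely at most $n^{d_i} / (d_i - f_i)!$, or more simply I would just bound the number of choices of the $d_i$ neighbors of $w_i$ (unordered, among $\le n$ vertices) by $n^{d_i}/d_i!$, of which $f_i$ are forced. The cleanest bookkeeping: the total count of (embedding of labeled $F$) $\times$ ($R \in \cR_\ud$ extending it) is at most $n^{j+1} \prod_i \binom{n}{d_i - f_i}$, and the corresponding power of $q$ is $q^{j + \sum_i (d_i - f_i)} = q^{\sum_i d_i}$ (using $e_F = j = \sum f_i$ and $e_R = j + \sum_i(d_i - f_i)$). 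This yields $\sum_{R \in \cR_\ud} \EE_q X_R \le n^{j+1} \prod_i \binom{n}{d_i-f_i} q^{\sum d_i}$; then I would regroup the factors of $n$ and $q$ per vertex and split according to whether $i$ is small or big.

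For small $i$ we have $d_i = f_i$, so that vertex contributes no extra edges and just the factor $(\eps d)^{f_i}$ will emerge from the $n^{f_i} q^{f_i} \cdot$(something) bookkeeping — here using $d = np$ and $q = \eps p$, so $nq = \eps d$, and absorbing the $F$-edge count so that each of the $j = \sum f_i$ edges of $F$ contributes a factor $nq = \eps d$ (matched against a factor $n$ for the new endpoint). For big $i$, vertex $v_i$ gets $d_i$ edges, of which $f_i$ are $F$-edges: the $f_i$ endpoints that are new-in-$F$ contribute $d_i^{f_i}$-type factors (choosing which of the up-to-$d_i$-ish slots), and the remaining $d_i - f_i$ extra edges contribute, via $\binom{n}{d_i-f_i} q^{d_i-f_i} \le (enq/(d_i))^{d_i} \cdot$(correction), the factor $(e\eps d/d_i)^{d_i}$. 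Matching these up carefully against the claimed right-hand side of \eqref{bd1.1'} — namely $en \prod_{\text{small}}(\eps d)^{f_i} \prod_{\text{big}} [(e\eps d/d_i)^{d_i} d_i^{f_i}]$ — is the crux, and I expect the factor-tracking (getting exactly one leading $n$, exactly the right powers of $e$, and correctly pairing the $f_i$ $F$-edges at a big vertex with both an $n$-factor for the endpoint and a $d_i^{f_i}$ combinatorial factor) to be the main fiddly obstacle; the inequality $\binom{n}{k} q^k \le (enq/k)^k$ from \eqref{elementary} and the identity $\sum f_i = j$ are the only non-bookkeeping inputs. Once the per-$R$ bound is in place, summing over $R \in \cR_\ud$ is subsumed because the $\binom{n}{d_i - f_i}$ already overcounts all such $R$, so no separate ``$\times |\cR_\ud|$'' loss is needed — the displayed product \emph{is} $\ga(\ud)$ up to the stated constant, and the strict inequality comes from the slack in \eqref{elementary}.
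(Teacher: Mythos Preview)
Your proposal is correct and follows essentially the paper's approach: reduce via \Cref{cor.nu} to $\sum_{R\in\cR_\ud}\EE_qX_R$ and bound $\sum_R N(K_n,R)$ by direct counting---the paper just processes vertices in BFS order (at each big $i$ choosing the set of all $d_i$ neighbors and then the $f_i$ labeled children among them, whence the factor $\binom{n}{d_i}d_i^{f_i}$) rather than embedding all of $F$ first and then adding $d_i-f_i$ extras. One wrinkle: the $d_i^{f_i}$ ``slot'' factor you try to produce in your final paragraph belongs to the paper's ordering, not yours; from your own count $n^{j+1}\prod_{\text{big}}\binom{n}{d_i-f_i}q^{\sum d_i}$ the big-$i$ contribution is $(\eps d)^{f_i}(e\eps d/(d_i-f_i))^{d_i-f_i}$, which is at most the target $(e\eps d/d_i)^{d_i}d_i^{f_i}$ via $(1+f_i/(d_i-f_i))^{d_i-f_i}\le e^{f_i}$---so there is no gap, but you should not conflate the two bookkeepings.
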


\begin{proof}
By \Cref{cor.nu}
it is enough to show
\beq{bd1.1} 
\sum_{R \in \cR_\ud} \EE_qX_R <\ga(\ud)/e.
\enq
Here the main point is to show
\beq{sumN}
\sum_{R \in \cR_\ud} N(K_n,R) <
n \prod_{\text{$i$ small}} n^{f_i}
\prod_{i \text{ big}} \C{n}{d_i}d_i^{f_i}, 
\enq
which,
since $\sum d_i =e_R$ for any $R\in \R_\ud$,
implies that the l.h.s.\ of \eqref{bd1.1} is less than
\[
n \prod_{\text{$i$ small}} (nq)^{f_i}
\prod_{i \text{ big}} \C{n}{d_i}d_i^{f_i}q^{d_i}
\le
n \prod_{\text{$i$ small}} (\eps d)^{f_i} 
\prod_{i \text{ big}} \left[\left(\frac{e\eps d}{d_i}\right)^{d_i} d_i^{f_i}\right].
\]

For the proof of \eqref{sumN}
we continue to use $w_i$ for the copy of 
$v_i$ in $\hat F$,
and now write $p(w_i)$ for the parent of $w_i$. 
We think of 
choosing $w_0$ (in at most $n$ ways) and then 
"processing" (in order)
$w_0\dots w_j$.

If $i$ is small, then "processing" $w_i$ 
means choosing its $f_i$ (labeled) children,
the number of possibilities for which is 
less than $(n)_{f_i}\leq n^{f_i}$.

If $i$ is big, then "processing" $w_i$ means 
choosing the \emph{set} of its $d_i$ neighbors 
not in $\{w_0\dots w_{i-1}\}$, and 
its children (with labels) from this set.
The number of ways to do this
(not all of which will  
lead to legitimate $\hat R$'s) 
is 
at most
\beq{naive}
\C{n}{d_i}d_i^{f_i}.
\enq
\end{proof}

We next bound the number of members of $\hat \cR_\ud$ that fit $H$ and share edges with a given copy 
$\hat R_0$.  
We first slightly refine $\hat{\cR}_\ud$.
For $ \hat{R} \in  \hat{\cR}$, set 
$\ub=\ub(\hat{R})=(b_1, b_2, \ldots, b_j)$, with
\[
b_i =b_i(\hat{R})=
|N_H(w_i) \cap \left(\{w_0, \ldots, w_{i-1}\} \setminus \{p(w_i)\}\right)|
\]
(recall $p(w_i)$ is the parent of $w_i$)
and define $\hat{\cR}_{\ud,\ub}$ 
in the natural way. 
The next observation will allow us to more or less 
ignore edges of $\hat{R}\sm \hat{F}$ 
with both ends in $\hat{F}$.

\begin{prop}\label{bi.sum} 
If $\hat{R}\sub H$, with 
$\ub(\hat{R}) = \ub$, then
\beq{bi.ub} 
\sum_{i \in [j]} b_i \le \max\{1, 3j\log\log n/\log n\}
=\max\{1,o(j)\}.
\enq
\end{prop}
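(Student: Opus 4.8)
The plan is to recognize the edges counted by $\sum_i b_i$ as (essentially) the edges of $\hat R\setminus\hat F$ that have \emph{both} endpoints inside $\hat F$, and to bound the total number of such "internal extra" edges using $q$-sparsity of $H$. Concretely, each $b_i$ counts neighbors of $w_i$ among $\{w_0,\dots,w_{i-1}\}\setminus\{p(w_i)\}$; summing over $i$, every such adjacency is an edge of $H$ inside the vertex set $V(\hat F)$ that is \emph{not} one of the $j$ tree-edges of $\hat F$. So $\sum_i b_i \le e_{H[V(\hat F)]} - j$. Since $H[V(\hat F)]$ is a subgraph of $H$ on $j+1$ vertices, if it had density $\ge \log n$ it would violate Proposition~\ref{prop:dense} (which gives $m(H) < \log n$); hence $e_{H[V(\hat F)]} < (j+1)\log n$, which is already $O(j\log n)$ but nowhere near the claimed $\max\{1,o(j)\}$. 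So the naive density bound is far too weak, and the real point must be sharper.

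The sharper idea: the subgraph $\hat F$ already "uses up" almost all the $q$-sparsity budget available on $j+1$ vertices, because a tree on $j+1$ vertices with $j$ edges is already near the $q$-sparse boundary when $q$ is around the threshold for trees. So I would instead argue directly: let $Z = V(\hat F)$ and suppose $H[Z]$ has $j + t$ edges, i.e.\ $t = \sum_i b_i$ extra edges beyond the tree. Then $H[Z] \subseteq H$ is a graph on $j+1$ vertices with $j+t$ edges, and $q$-sparsity gives $1 \le \EE_q X_{H[Z]} \le (n)_{j+1} q^{j+t} \le n^{j+1} q^j q^t = (n q)^{j} n q^t$. Now $nq = \eps d < \eps \cdot \eps^{-1/3}\log n = \eps^{2/3}\log n$ by \eqref{WMAd}, and more simply $q \le 1/L$ by \eqref{qL} with $q \le n^{-c}$-type bounds not available in general — so I need to be careful about whether $nq \le 1$. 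In the regime of interest $q = \eps p$ and $d = np$, so $nq = \eps d$, which by the lower bound in \eqref{WMAd} satisfies $nq > 1/3$; it need not be $\le 1$. So from $(nq)^j n q^t \ge 1$ I get $q^t \ge (nq)^{-j} n^{-1} = (\eps d)^{-j} n^{-1}$, i.e.\ $t \log(1/q) \le j\log(\eps d) + \log n \le j\log\log n + \log n$ (crudely, using $\eps d \le \log n$). Since $q \le n^{-c}$ is not guaranteed, I should instead use $q \le \eps d / n$ together with $d \le \eps^{-1/3}\log n$ to get $\log(1/q) \ge \log(n/(\eps d)) \ge \log n - \log\log n \ge (1-o(1))\log n$. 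Dividing, $t \le \dfrac{j\log\log n + \log n}{(1-o(1))\log n} = \dfrac{j\log\log n}{\log n}(1+o(1)) + O(1)$, which gives exactly the claimed $\max\{1, 3j\log\log n/\log n\}$ after absorbing constants and the additive $O(1)$ into the $\max$ with $1$.

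The main obstacle I anticipate is handling the two regimes of the max cleanly — when $j$ is small (say $j\log\log n/\log n < 1$) the bound should degrade gracefully to the constant $1$, which corresponds to the additive $\log n$ term in the exponent calculation dominating; I need to check that the "$+\log n$" from the single factor of $n$ in $(n)_{j+1}$ translates to at most one extra edge, i.e.\ $\log n / \log(1/q) \le 1 + o(1)$, which holds precisely because $\log(1/q) \ge (1-o(1))\log n$ in the range \eqref{WMAd}. A secondary subtlety is that I've been sloppy about whether $H[Z]$ as a partially labeled/unlabeled object matches what $\EE_q X_{\cdot}$ expects — but since $q$-sparsity quantifies over all subgraphs $I \subseteq H$ and $H[Z]$ is literally an induced subgraph of $H$, this is fine; I just apply the definition. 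I'd also double-check that every adjacency counted in some $b_i$ genuinely corresponds to a distinct edge of $H[Z]$ not among the $j$ tree edges (the parent-edge $w_i p(w_i)$ is explicitly excluded, and breadth-first ordering ensures each non-tree edge $\{w_i, w_k\}$ with $k<i$ is counted exactly once, in $b_i$), so $\sum_i b_i = e_{H[Z]} - j$ exactly, not just $\le$.
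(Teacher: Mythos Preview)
Your proposal is correct and is essentially the paper's argument: both apply $q$-sparsity to $S=H[V(\hat F)]$, use $e_S=j+\sum b_i$, and exploit $nq=\eps d<\log n$ (from \eqref{WMAd}) to force $\sum b_i$ small. The only cosmetic difference is that the paper sets $\sum b_i=\delta j$, rewrites $\EE_qX_S\le n^{1-\delta j}(\log n)^{(1+\delta)j}$, and splits into $\delta j\le 1$ versus $\delta j\ge 2$, whereas you take logarithms and solve for $t=\sum b_i$ directly; the integrality of $t$ then recovers the constant $3$ just as in the paper.
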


\begin{proof}
Set $\sum b_i = \gd j$ and $S=H[V(\hat{F})]$. Then $v_S=j+1$, $e_S= (1+\gd)j,$ and
\beq{EqXS}
\EE_qX_S\le n^{j+1}q^{(1+\gd)j}= n^{j+1}(\eps d/n)^{(1+\gd)j}=n^{1-\gd j}(\eps d)^{(1+\gd)j}\le n^{1-\gd j} (\log n)^{(1+\gd)j},
\enq
where the last inequality uses \eqref{WMAd}
(weakly) to say $\eps d <\log n$.

If $\gd j \ge 2$ (i.e.\ $\sum b_i>1$), 
then the r.h.s.\ of \eqref{EqXS}
is at most $(\log^{1+\gd} n/n^{\gd/2})^j$, which is less than 1
(in fact $o(1)$)
if $\gd >3 \log\log n/\log n$;
and \eqref{bi.ub} follows since $H$ is $q$-sparse. 
\end{proof}

\begin{prop}\label{prop:b.bd}
The number of possibilities for $\ub$ is at most 
$j+ e^{o(j)}$.
\end{prop}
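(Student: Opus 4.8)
The plan is to bound the number of vectors $\ub = (b_1, \ldots, b_j)$ that can arise as $\ub(\hat R)$ for some $\hat R \subseteq H$. By \Cref{bi.sum}, any such $\ub$ satisfies $\sum_{i\in[j]} b_i \le M$, where $M := \max\{1, 3j\log\log n/\log n\} = \max\{1, o(j)\}$. So it suffices to count nonnegative integer vectors of length $j$ with coordinate sum at most $M$; every $\ub$ that occurs is one of these, though not every such vector need occur (which is fine, we only need an upper bound). Thus the proposition reduces to a purely combinatorial counting estimate, and no further properties of $H$ or of the copies $\hat R$ are needed beyond \Cref{bi.sum}.

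The first step is to split into the two regimes of \eqref{bi.ub}. If $n$ is small enough that $M = 1$ (i.e.\ $3j\log\log n/\log n \le 1$), then $\sum b_i \le 1$, so $\ub$ is either the zero vector or one of the $j$ standard basis vectors; this gives at most $j+1 \le j + e^{o(j)}$ possibilities, as claimed. Otherwise $M = 3j\log\log n/\log n = o(j)$, and I would count via the standard stars-and-bars bound: the number of nonnegative integer vectors of length $j$ with sum exactly $m$ is $\binom{j+m-1}{m} \le \binom{j+m}{m}$, so the number with sum at most $M$ is at most $(M+1)\binom{j+M}{M}$. Since $M = o(j)$, the dominant contribution is $\binom{j+M}{M} \le \left(\frac{e(j+M)}{M}\right)^{M} = \exp\left[M\log\left(e(j+M)/M\right)\right]$. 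Writing $M = \mu j$ with $\mu = 3\log\log n/\log n = o(1)$, the exponent is $\mu j \log(e(1+\mu)/\mu) = j \cdot \mu\log(O(1/\mu))$, and since $\mu \to 0$ we have $\mu \log(1/\mu) \to 0$, so this exponent is $o(j)$. Absorbing the harmless factor $M+1 = o(j) = e^{o(j)}$, the total count is $e^{o(j)}$ in this regime, and combining the two cases gives the bound $j + e^{o(j)}$ stated in the proposition.

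I do not expect a real obstacle here; the one point requiring a small amount of care is verifying that $\mu \log(1/\mu) = o(1)$ with $\mu = \Theta(\log\log n/\log n)$, which is immediate since $x\log(1/x)\to 0$ as $x\to 0^{+}$, so that $M\log(e j/M) = o(j)$. The only reason the bound is phrased as $j + e^{o(j)}$ rather than simply $e^{o(j)}$ is to cleanly cover the degenerate $M = 1$ case (where the count is exactly $j+1$ and is not $e^{o(j)}$ when $j$ is, say, polynomial in a way that makes $\log(j+1)$ not $o(j)$—though in fact $\log(j+1) = o(j)$ always, so even there $j+1 \le e^{o(j)}$; keeping the ``$j+$'' is just a convenience). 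So in the write-up I would state both cases and not worry about unifying them.
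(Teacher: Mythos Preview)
Your proposal is correct and follows essentially the same approach as the paper: invoke \Cref{bi.sum} to bound $\sum_i b_i$ by $\max\{1,o(j)\}$, then use stars-and-bars to count nonnegative integer vectors with that sum, handling the $B\le 1$ case separately and bounding $\binom{B+j-1}{B}=e^{o(j)}$ in the $B=o(j)$ regime via $\mu\log(1/\mu)\to 0$. The paper's write-up is terser but the argument is the same.
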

\begin{proof}
Let $B=\sum_{i \in [j]} b_i$; so  
\Cref{bi.sum} says
$B \le \max\{1,o(j)\}$.
Given $B$, the number of possibilities for $\ub$
is at most $\C{B +j-1}{B}$, which is $j$ if $B=1$ and 
$e^{o(j)}$ if $B=o(j)$; so, crudely, the 
number of possible $\ub$'s is at most 
$j+o(j)e^{o(j)}=j+e^{o(j)}$.
\end{proof}

If $\hat R\in \hat \R_{\ud,\ub}$ fits $H$, then (for any $i$) 
\beq{H.degree}
d_H(w_i)\le \max\{d_i,\sqrt{\eps}d\}+b_i+1_{\{i\neq 0\}}
\,=:\, B_i,
\enq
where we note that the max is $d_i$ if $i$ is big
(in which case \eqref{H.degree} holds with equality),
and $\sqrt{\eps}d$ if $i$ is small (in which case
\eqref{H.degree} is strict).

For $\ell \in [j]$, let
\[
Q(\ell)=\{i \in [j]: \text{$v_i$ is an internal 
vertex of the path in $F$ connecting $v_0$ and $v_\ell$}\}
\]
(that is, $Q(\ell)$ is the set of indices of 
non-root ancestors of $v_\ell $).

\begin{lem}\label{beta.def} 
For any $e \in H$, $\ud$ and $\ub$, 
the number of $\hat R \in \hat \cR_{\ud, \ub}$ that contain $e$ and fit $H$ is at most
\beq{cases.intersection} 
2\sum_{\ell=0}^{j}\prod_{\text{$i$ small}} (\sqrt \eps d)^{f_i} \prod_{\text{$i$ big}} d_i^{f_i} \cdot K(\ell),
\enq
where 
\beq{Kell}
K(\ell)\,
= \,
\prod_{\substack{\text{$i$ big} \\ 
i \in Q(\ell)}} \left(\frac{d_i+b_i+1}{d_i}\right){\prod_{\substack{\text{$i$ small}\\
i \in Q(\ell) }}}\left(\frac{\sqrt \eps d+b_i+1}{\sqrt \eps d}\right)
\cdot \frac{B_\ell}{\max\{d_0,\sqrt{\eps}d\}}.
\enq
\end{lem}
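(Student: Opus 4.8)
The plan is to count the copies $\hat R \in \hat\cR_{\ud,\ub}$ that contain a fixed edge $e = \{x,y\}$ and fit $H$ by first deciding \emph{where $e$ sits inside $\hat R$}, and then building out the rest of $\hat R$ in a breadth-first manner exactly as in the proof of \Cref{cl:bd1}, but ``rooted at $e$'' rather than at an arbitrary $w_0$. The gain is that, having fixed $e$, the embedding of $w_0$ is no longer free (costing $n$): instead one of the $v_i$'s must be embedded near $e$, and this forces a chain of constraints back up to the root, so the $n$ gets replaced by a product of \emph{bounded} local factors --- precisely the factors appearing in $K(\ell)$.

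First I would fix the edge $e$ and consider the possibilities for which pair of vertices of $F$ map onto $\{x,y\}$ under $\hat R$. Since $\hat R$ consists of $\hat F$ together with the extra ``fit'' edges at each $w_i$, either both endpoints of $e$ are vertices of $\hat F$, or one endpoint of $e$ is a vertex $w_i$ of $\hat F$ and the other is one of the extra neighbours attached at $w_i$ (a non-$F$ vertex, or an $F$-vertex reached via a $b_i$- or $d_i$-edge). In every case there is a well-defined ``highest'' (closest to the root) vertex $w_m$ of $\hat F$ incident to $e$, and an index $\ell \in [0,j]$ --- the ``deepest'' $F$-vertex forced by $e$ --- such that knowing the image of $w_m$ (or of the endpoint of $e$ lying outside $\hat F$) determines, by walking up the tree path $Q(\ell)$, a candidate for $w_0$. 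The factor $2$ out front absorbs the choice of which endpoint of $e$ plays which role, and the sum $\sum_{\ell=0}^j$ indexes the ``deepest forced vertex''.

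Next I would run the BFS construction of $\hat R$, but reorganised so that we first place the chain $w_\ell, p(w_\ell), p^2(w_\ell), \ldots, w_0$ (the ancestors of $v_\ell$, i.e.\ the indices in $Q(\ell)$ together with $\ell$ and $0$) and only then fill in the remaining children. When we process an ancestor $w_i$ with $i \in Q(\ell)$ \emph{from below}, i.e.\ we already know one child of $w_i$ (the one on the path to $w_\ell$) and want to choose $w_i$ itself: since $\hat R$ fits $H$, the vertex $w_i$ has $H$-degree at most $B_i = \max\{d_i,\sqrt\eps d\} + b_i + 1_{\{i\ne 0\}}$, so there are at most $B_i$ candidates for $w_i$ given the known child; but in the ordinary BFS bound of \Cref{cl:bd1} the step at $w_i$ was already ``paid for'' by a factor $\max\{d_i,\sqrt\eps d\}^{f_i}$ when choosing $w_i$'s children from above --- and one of those $f_i$ children is now the already-known path-child. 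Dividing the ``from below'' cost $B_i$ by the ``from above'' per-child cost $\max\{d_i,\sqrt\eps d\}$ gives exactly the ratio $(d_i+b_i+1)/d_i$ (for $i$ big) or $(\sqrt\eps d + b_i+1)/(\sqrt\eps d)$ (for $i$ small) appearing in $K(\ell)$; the trailing factor $B_\ell/\max\{d_0,\sqrt\eps d\}$ accounts for the bottom of the chain at $\ell$ and the top at $0$ in the same bookkeeping. Everything else --- the children of all vertices not on the path --- is bounded exactly as in \Cref{cl:bd1}, contributing the leading $\prod_{\text{small}}(\sqrt\eps d)^{f_i}\prod_{\text{big}}d_i^{f_i}$.

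The main obstacle, and the step demanding the most care, is the bookkeeping in that reorganised BFS: one must check that each vertex of $\hat R$ is ``charged'' exactly once, that processing the ancestor chain $w_\ell \to \cdots \to w_0$ ``from below'' and then the rest of the tree ``from above'' really does cover all edges of $\hat R$ without double-counting, and that the per-step bounds (in particular the use of the fit condition \eqref{fit} and of \eqref{H.degree} to get the $B_i$'s) are applied at the right moment --- before or after the relevant endpoints are known. One also has to be slightly careful about the degenerate cases ($\ell = 0$, or $e$ incident to the root, or $e$ joining two $F$-vertices), which is what the crude ``$2\sum_{\ell=0}^j$'' and the $1_{\{i\ne 0\}}$ in $B_i$ are there to handle. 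Once the charging scheme is pinned down, the inequality \eqref{cases.intersection} falls out by multiplying the per-step bounds.
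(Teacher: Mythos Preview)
Your proposal is correct and follows essentially the same route as the paper: choose an endpoint of $e$ lying in $V(\hat F)$ (the factor $2$), guess its role $w_\ell$ (the sum over $\ell$), walk up the ancestor chain to $w_0$ using the degree bounds $B_i$ from \eqref{H.degree}, and then run the BFS of \Cref{cl:bd1} while dividing out the per-child factors $\max\{d_i,\sqrt\eps d\}$ for the already-known vertices on the path---this bookkeeping produces exactly $K(\ell)$. The paper's write-up is a little more streamlined than yours (it does not separately track a ``highest'' $w_m$ and a ``deepest'' $w_\ell$, nor case-split on whether the other endpoint of $e$ is in $\hat F$; it simply commits to one endpoint as $w_\ell$ and absorbs the slack into the factor $2$), but the argument is the same.
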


\begin{proof}
We first choose an end, $w$, of $ e$ in $V(\hat F)$ 
(where $\hat R\sim \hat F$; 
this gives the 2 in \eqref{cases.intersection}), 
and the role, $w_\ell$, 
of $w$ in $\hat F.$ 
It is then enough to bound the number of possibilities
for the rest of $\hat R$ by the $\ell^{\text{th}}$ summand in \eqref{cases.intersection}.

Note that for $\ell=0$ (where $K(\ell)=1$), 
the summand is just the bound of
\Cref{cl:bd1}, except that
we no longer need the factor $n$ since we 
already know $w_0$.

For a general $\ell$ 
we first specify $(w_i:i\in Q(\ell)\cup \{0\})$, 
the number of possibilities for which is, by \eqref{H.degree}, at most 
\beq{prodB}
\prod_{i\in Q(\ell)\cup \{\ell\}}B_i.
\enq
Then, for the number of ways to choose the
rest of $\hat R$, we
again argue as in \Cref{cl:bd1}, now
skipping terms in the bound corresponding
to choosing the already known $w_i$'s with
$i\in Q(\ell)\cup \{0, \ell\}$;
this bounds the number of possibilities
by the double product in 
\eqref{cases.intersection} divided by
\[
\prod_{i\in Q(\ell)\cup \{0\}}
\max\{d_i,\sqrt{\eps}d\},
\]
and multiplying by \eqref{prodB}
gives the promised $\ell^{\text{th}}$ summand. 
\end{proof}

From now until the last paragraph
of this section, we fix $\ud$
and let 
$D=D(\ud)$ ($:=\sum_{\text{$i$ big}} d_i$;
see \eqref{D(d)}). 
We have included the $K(\ell)$'s in 
\Cref{beta.def} to help keep track of
what the proof is doing, 
but 
will use only the simplifying 
\beq{d.b.bd}
K(\ell)\le K:= 
(D+j)\cdot\prod_{i \in [j]}\left(1+(b_i+1)/\sqrt \eps d\right) 
< (D+j)\exp\left[(j+\sum_{i\in [j]}b_i)/\sqrt \eps d\right] 
<(D+j)\exp[O(j)/\sqrt \eps d],
\enq
where 
$D+j$
corresponds to the trivial $B_\ell \leq D+j$, 
and the last inequality 
uses \Cref{bi.sum}
(and the $O(j)$ in the final exponent
is actually $(1+o(1))j$).

With the substitution of $K$ for $K(\ell)$,
the summands in 
\eqref{cases.intersection} no longer depend on 
$\ub$ or $\ell$, and, using \Cref{prop:b.bd},
we have a simpler version of \Cref{beta.def}:

\begin{cor}\label{beta.def'} 
For any $e \in H$ and $\ud$, 
the number of $\hat R \in \hat \cR_{\ud}$ 
that contain $e$ and fit $H$ is at most
\beq{cases.intersection'} 
\gb(\ud):= 
2(j+e^{o(j)})(j+1)K\cdot\prod_{\text{$i$ small}} (\sqrt \eps d)^{f_i} \prod_{\text{$i$ big}} d_i^{f_i}.
\enq
\end{cor}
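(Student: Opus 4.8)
The plan is to deduce \Cref{beta.def'} from \Cref{beta.def} by two bookkeeping steps: replace the $\ell$-dependent factors $K(\ell)$ by the single bound $K$ of \eqref{d.b.bd}, and then sum over the admissible refinements $\ub$ of $\ud$.

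Fix $e\in H$ and a legal $\ud$. First I would observe that every copy $\hat{\cR} \ni \hat R$ in $\hat{\cR}_\ud$ lies in exactly one $\hat{\cR}_{\ud,\ub}$ (namely for $\ub=\ub(\hat R)$), so the number of $\hat R\in\hat{\cR}_\ud$ containing $e$ and fitting $H$ equals $\sum_{\ub}$ of the corresponding count for $\hat{\cR}_{\ud,\ub}$, the sum running over those $\ub$ that actually occur for some copy $\hat R\sub H$. By \Cref{bi.sum} each such $\ub$ has $\sum_{i\in[j]}b_i\le\max\{1,o(j)\}$, and by \Cref{prop:b.bd} there are at most $j+e^{o(j)}$ of them.

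Next, for a fixed $\ub$, \Cref{beta.def} bounds the number of $\hat R\in\hat{\cR}_{\ud,\ub}$ that contain $e$ and fit $H$ by $2\sum_{\ell=0}^{j}\big(\prod_{\text{$i$ small}}(\sqrt\eps d)^{f_i}\prod_{\text{$i$ big}}d_i^{f_i}\big)K(\ell)$. The double product depends only on $\ud$ and $F$, so it factors out of the sum; and by \eqref{d.b.bd} every $K(\ell)$ is at most $K$, a quantity independent of $\ell$ whose own bound $(D+j)\exp[O(j)/\sqrt\eps d]$ (with $D=D(\ud)$) is moreover independent of $\ub$, again by \Cref{bi.sum}. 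Hence the $\ell$-sum contributes only the factor $j+1$, and the count for a single $\ub$ is at most $2(j+1)K\prod_{\text{$i$ small}}(\sqrt\eps d)^{f_i}\prod_{\text{$i$ big}}d_i^{f_i}$. Summing over the at most $j+e^{o(j)}$ admissible $\ub$'s then produces exactly the right-hand side of \eqref{cases.intersection'}, i.e.\ $\gb(\ud)$.

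I do not expect a genuine obstacle here: \Cref{beta.def'} is purely a repackaging of \Cref{beta.def} with \Cref{bi.sum} and \Cref{prop:b.bd}, and the only point needing care is to read the ``$K$'' in the statement as the $\ub$-free bound from \eqref{d.b.bd}, so that it may be pulled outside the sum over $\ub$. (The reason for keeping the finer \Cref{beta.def} is that the factors in $K(\ell)$ record how much of $\hat R$ is already pinned down, or ``wasted'' inside $\hat F$; that is the form in which the estimate will be fed, via \eqref{N.nu.B}, into the argument combining it with \Cref{Lnubd} later in \Cref{sec:trees}.)
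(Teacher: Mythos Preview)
Your proposal is correct and follows essentially the same route as the paper: replace $K(\ell)$ by the uniform bound $K$ from \eqref{d.b.bd}, which makes the summands independent of both $\ell$ and $\ub$, then sum over $\ell$ (factor $j+1$) and over $\ub$ via \Cref{prop:b.bd} (factor $j+e^{o(j)}$). You were right to flag that the $K$ in the statement of \Cref{beta.def'} should be read as the final $\ub$-free bound $(D+j)\exp[O(j)/\sqrt\eps d]$ in \eqref{d.b.bd}; the paper is slightly casual about this, but uses exactly that reading downstream in \eqref{total.sum}.
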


So, since for any $R \in \cR_\ud$,
\beq{deg.sum} 
e_R=\sum\{ d_i :i \in [0, j]\} \le D+j,
\enq
we may overestimate the number of
$\hat R$'s in $\hat \R_\ud$ that fit $H$ 
and share edges with a given $\hat R_0$ by
$e_R \gb(\ud) $, 
which with \Cref{Lnubd} and \eqref{deg.sum} gives
\beq{eq:upshot} 
\sum_{R \in \cR_{\ud}} N^*(H,R) ~<~
\sum_{R \in \cR_{\ud}} \nu(H,R) \cdot e_R \cdot \beta(\ud) ~<~
\ga(\ud) (D+j) \beta(\ud).
\enq
Now inserting the values of $\ga(\ud)$ from 
\eqref{bd1.1'} and $\gb(\ud)$ from 
\eqref{cases.intersection'} (with the bound on $K$
in \eqref{d.b.bd}), and (slightly) simplifying,
we find that the l.h.s.\ of
\eqref{eq:upshot} is at most
\beq{total.sum}
n \cdot\left\{e(D+j)^2 
2(j+ e^{o(j)})(j+1)e^{O(j)/\sqrt \eps d} \right\}\cdot
\prod_{\text{$i$ small}} (\eps^{3/2}d^2)^{f_i}
\prod_{i \text{ big}} \left[\left(\frac{e\eps d}{d_i}\right)^{d_i} d_i^{2f_i}\right].
\enq
This looks unpleasant but is actually simple, since 
the terms
$(\frac{e\eps d}{d_i})^{d_i}$
dominate the rest (apart from $n$): since $d_i\geq \sqrt{\eps} d$ when $i$ is big, 
the product of these terms is less than
\beq{main.term}
(e\sqrt{\eps})^{D},
\enq
whereas:
the expression in $\{\,\}$'s is 
$O(D^2)e^{O(j)}$;
since $\sum f_i = j$, the first 
product (even sacrificing
the terms with $\eps^{3/2}$) is at most
$d^{2j}$; 
and what's left of the second product is 
\[\prod_{i \text{ big}} d_i^{2f_i} \leq \prod_{i \text{ big}} d_i^{2\gD} < 2^{2D\gD}\]
(using $d_i < 2^{d_i}$
for the second inequality). 
So the bound in \eqref{total.sum} is 
no more than
\[
n D^2 e^{O(j)} d^{2j} 2^{2D\gD} (e\sqrt{\eps})^{D} < n 2^{O(D)}(e\sqrt{\eps})^{D},
\]
where the inequality (finally)
uses
$D > j \log d$ (and the implied
constant in $2^{O(D)}$ depends on $\gD$).

Finally, now fixing $D > j \log d$
and letting $\ud$ vary,
and recalling from 
\eqref{count.bd} that the number of 
$\ud$'s with $D(\ud) = D$ is
\[\exp\left[O\left(\frac{D}{\sqrt \eps d} 
\log (d)\right)\right] = 2^{O(D)},\]
we have
\[
\sum_{D(\ud) = D}\sum_{R \in \cR_{\ud}} N^*(H,R) < n 2^{O(D)}(e\sqrt{\eps})^{D},
\]
which (with a small enough $\eps$) gives 
\eqref{2nd.goal}.
\qed

\section{Cycles}\label{sec:cycles}

Here we prove the cycle portion of \Cref{thm:clique}; 
to repeat:
We assume that $p=Lq$ with $L$ a large constant,
$H$ is $q$-sparse,
and $F=C_k$ for some $k\in [3,n]$, 
and want to show
\beq{cycle.show}
N(H,F) < \E_pX_F.
\enq
Since
\beq{eq:EpXF.lb'} \EE_pX_{F}=N(K_n, F)p^{e(F)}=\frac{(n)_k}{2k}p^k > \frac{1}{2k}\left(\frac{np}{e}\right)^k =\frac{1}{2k}\left(\frac{L}{e}\right)^k (nq)^k \ge L^{.9k}(nq)^k,
\enq
it is enough to show that $N(H,F)$ is at most the r.h.s.\ of \eqref{eq:EpXF.lb'}.
To begin we eliminate easy ranges for $q$:

\begin{prop}\label{prop:cycle.density} 
If $q\not\in (1/n ,  1/\sqrt n)$ then \eqref{cycle.show} holds.
\end{prop}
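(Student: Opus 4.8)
The plan is to treat the two pieces $q\le 1/n$ and $q\ge 1/\sqrt n$ by separate, essentially elementary arguments; the genuinely interesting range $1/n<q<1/\sqrt n$ is what the rest of the section handles (via paths).

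For $q\le 1/n$ I would simply observe that $H$ must be a forest, so $N(H,F)=0$ and \eqref{cycle.show} is immediate. Indeed, if $C_\ell\sub H$ for some $\ell\ge 3$, then $q$-sparseness forces $\EE_qX_{C_\ell}\ge 1$, whereas
\[
\EE_qX_{C_\ell}=\frac{(n)_\ell}{2\ell}\,q^\ell<\frac{(nq)^\ell}{2\ell}\le\frac1{2\ell}<1
\]
(using $nq\le 1$), a contradiction.

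For $q\ge 1/\sqrt n$ I would set $\ga=\log_n(1/q)$, so $q=n^{-\ga}$ with $\ga\in[\log_n L,\tfrac12]$ — the lower bound coming from $q\le 1/L$, the upper from $q\ge 1/\sqrt n$. Since $q\ge 1/\sqrt n\ge\log n/n$, \Cref{prop:max.deg} gives $\gD_H<2enq=2en^{1-\ga}$; and \Cref{prop:dense} (with $c=\ga$) together with \Cref{cor:e_H} gives $e_H<n/\ga$. A copy of $C_k$ in $H$ is in particular a closed walk of length $k$, so picking a directed edge and then extending one vertex at a time yields $\tdN(H,C_k)\le 2e_H\gD_H^{k-2}$ and hence
\[
N(H,C_k)=\frac{\tdN(H,C_k)}{2k}\le\frac{e_H\gD_H^{k-2}}{k}<\frac{(2e)^{k-2}}{k\ga}\,n^{1+(1-\ga)(k-2)}.
\]
Dividing by $(nq)^k=n^{k(1-\ga)}$, the exponent of $n$ collapses to $2\ga-1$, so that $N(H,C_k)/(nq)^k<\frac{(2e)^{k-2}}{k\ga}\,n^{2\ga-1}$. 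Now $n^{2\ga-1}\le 1$ since $\ga\le\tfrac12$, and — this is the crux — the function $\ga\mapsto n^{2\ga-1}/(k\ga)$ has positive logarithmic derivative $2\log n-1/\ga$ on the whole interval $[\log_n L,\tfrac12]$ (as $L$ is large), hence is maximized at $\ga=\tfrac12$, where it equals $2/k$. So $N(H,C_k)/(nq)^k<\tfrac2k(2e)^{k-2}<L^{.9k}$ once $L$ is large enough that $(2e)^k\le L^{.9k}$; that is, $N(H,C_k)<L^{.9k}(nq)^k$, which by \eqref{eq:EpXF.lb'} gives \eqref{cycle.show}.

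I expect the only real obstacle to be this last estimate: the crude bound $e_H<n\log n$ of \Cref{cor:e_H} does not suffice — it would leave a stray $\log n$ factor and fail for bounded $k$ — so one must instead use the sharper, $q$-dependent bound $e_H<n/\log_n(1/q)$ and exploit the monotonicity in $\ga$ above. This is also exactly why the complementary range $1/n<q<1/\sqrt n$, where $n^{2\ga-1}$ becomes a growing power of $n$, resists this approach and must be postponed.
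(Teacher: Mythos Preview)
Your proof is correct. Both cases are handled by the same basic devices as the paper: for $q\le 1/n$ the observation that $H$ contains no cycle, and for $q\ge 1/\sqrt n$ the crude bound $N(H,C_k)\le e_H\gD_H^{k-2}$ together with \Cref{prop:max.deg} and \Cref{cor:e_H}.

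The one genuine difference is in how the large-$q$ case is finished. The paper \emph{does} use the ``crude'' bound $e_H<n\log n$: it splits $q\ge 1/\sqrt n$ into the two subranges $q\ge (\log n/n)^{1/2}$ (where $(nq)^2\ge n\log n$ absorbs the extra $\log n$) and $1/\sqrt n\le q<(\log n/n)^{1/2}$ (where instead $e_H<3n$ from \Cref{cor:e_H} with $c=1/3$ suffices). You avoid this split by using the sharper, $q$-dependent bound $e_H<n/\ga$ uniformly and then observing that $\ga\mapsto n^{2\ga-1}/\ga$ is increasing on $[\log_n L,\tfrac12]$, so the worst case is $\ga=\tfrac12$. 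Both routes give the same conclusion with the same inputs; yours is a single argument at the cost of a small calculus step, while the paper's two-case version keeps each estimate entirely elementary. Your closing remark that ``$e_H<n\log n$ does not suffice'' is correct as a uniform statement over all $q\ge 1/\sqrt n$, but the paper shows it \emph{does} suffice once one restricts to $q\ge(\log n/n)^{1/2}$. (One tiny notational quibble: the paper takes $\log=\log_2$, so your logarithmic derivative should read $2\ln n-1/\ga$; the conclusion is of course unaffected.)
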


\begin{proof} If $q \le 1/n$, then $\EE_qX_F < n^kq^k \le1$; so $q$-sparsity of $H$ forces 
$N(H,F)=0$.  (Note that
when $q<1/(3n)$, the second part of
\Cref{MT} gives \Cref{conj:main} for a 
\emph{general} $F$.)

For $q\ge 1/\sqrt{n}$ we use the naive bound
\beq{eq:cycle.dense.ub}
N(H,F)\le 
e_H \cdot \Delta_H^{k-2}, 
\enq
in which the r.h.s.\ (over)counts
ways to choose 
$xy \in E(H)$ and a $(k-1)$-edge path (in $H$)
joining $x$ and $y$.
We then recall that 
\Cref{prop:max.deg} promises 
$\gD_H\le 2enq$, while 
\Cref{cor:e_H} bounds $e_H$ by 
$ n\log n$ in general, and by $3n$
if $q < (\log n/n)^{1/2}$; so
\[
N(H,F) \le \left\{\begin{array}{ll} 
n\log n(2enq)^{k-2} &\mbox{in general,}\\
3n(2enq)^{k-2}&\mbox{if $q< (\log n/n)^{1/2}$,}
\end{array}\right.
\]
and the r.h.s.\ of \eqref{eq:EpXF.lb'}
exceeds the first bound if 
$q \ge (\log n/n)^{1/2}$ and 
the second if $q\ge 1/\sqrt n$.
\end{proof}

So for the rest of this discussion we assume 
\beq{q.bound.cycle}
1/n<q<1/\sqrt n.
\enq

Our approach here is simple
(the trivial 
\eqref{eq:cycle.dense.ub} is a first example), 
but 
turns out to be rather delicate:
for some carefully chosen $m$ we use \Cref{thm:tree}
to bound the number of $P_m$'s 
($m$-edge paths) in $H$, 
and then bound the number of extensions to copies of 
$F$ using \Cref{lem:cycle.key},
which is the main new point in this section. 
What we need from \Cref{thm:tree} is
\beq{eq:paths.ub} 
\text{there is a fixed $L_1$ such that if $H$ is $q$-sparse then, for any $m$, 
$N(H, P_m)< n^{m+1}(L_1 q)^m$.}
\enq

\mn

For the rest of this discussion
we work with the following 
definitions and assumptions. 
We assume 
\[
nq=n^{c}
\]
(so, by \eqref{q.bound.cycle}, 
$c\in (0,1)$).
For distinct $x,y \in V(H)$, we use 
"$(x,y)$-$P_\ell$" for a $P_\ell$ in $H$ with 
endpoints $x$ and $y$, and set
\beq{gam.def}
\gamma(\ell)=\max_{\substack{x,y \in V(H)\\ x \ne y}} |\{\mbox{$(x,y)$-$P_\ell$'s} \}|. 
\enq 
For $\gd \in (0,1)$, we define 
$\hat \ell(\gd)$ to 
be the largest integer $\ell$ for which
\beq{eq:ell.def}
(nq)^\ell<n^{1-\gd c}. 
\enq 
\begin{lem}\label{lem:cycle.key}
Suppose $H$ is $q$-sparse. 
Let $\gd \in (0,1)$ be given and $\hat \ell=\hat \ell(\gd)$.
If $\ell$ satisfies \eqref{eq:ell.def}
(i.e.\ $\ell\le \hat\ell$), 
then $\gamma(\ell)=O(\hat \ell/\gd)$,
and if
\beq{eq:ell.def'}
(nq)^\ell<n^{1-\gd},\enq
then $\gamma(\ell)=O(1/\gd)$.
\end{lem}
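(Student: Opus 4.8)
The plan is to bound $\gamma(\ell)$ — the maximum, over distinct $x,y$, of the number of $(x,y)$-paths of length $\ell$ in $H$ — by exploiting $q$-sparsity in the same spirit as Proposition~\ref{prop:nu} and Corollary~\ref{cor.nu}: if there were too many $(x,y)$-$P_\ell$'s, then taking their union $R$ (on a vertex set of size at most $2+\ell\cdot\gamma(\ell)$, since each path contributes at most $\ell-1$ internal vertices) would give a graph with $\EE_q X_R < 1$, contradicting $R\subseteq H$. The point is that the two fixed endpoints $x,y$ cost nothing: the number of vertices outside $\{x,y\}$ grows only linearly in the number of paths, while the number of edges grows linearly too, and the per-path edge count $\ell$ versus per-path new-vertex count $\ell-1$ gives a net saving of one edge per path once we account for the (potentially many) paths all meeting at $x$ and $y$.

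More concretely, first I would set $t=\gamma(\ell)$ and fix $x,y$ attaining it, with paths $P^{(1)},\dots,P^{(t)}$. Order them and let $v_s,e_s$ be the numbers of vertices (resp.\ edges) of $P^{(s)}$ not already appearing in $\bigcup_{r<s}P^{(r)}$; then $\sum_s v_s = v_R - 2$ (the $-2$ for $x,y$) and $\sum_s e_s = e_R$, and each newly-added path has $e_s \ge v_s + 1$ unless it introduces no new edges (in which case $e_s = v_s = 0$). So, writing $v_R - 2 = \sum v_s$ and using $q$-sparsity,
\[
1 \le \EE_q X_R \le n^{v_R} q^{e_R} = n^2 q^{\,e_1}\prod_{s\ge 2} n^{v_s} q^{e_s} \le n^2 q^{\ell}\prod_{s\ge 2} (n q)^{v_s} q,
\]
so every fresh path contributes a factor at most $q\cdot (nq)^{v_s}$. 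The worst case is $v_s = \ell-1$ for each (the paths internally disjoint), giving a factor $q(nq)^{\ell-1} = (nq)^\ell / n$ per path beyond the first. Since \eqref{eq:ell.def} says $(nq)^\ell < n^{1-\delta c}$, each such factor is $< n^{-\delta c}$, and the first path contributes $q^\ell < (nq)^\ell/n^\ell \le n^{-\delta c}$ as well (crudely); combining, $1 < n^2\cdot n^{-\delta c\, t}$, forcing $t = O(1/(\delta c))$. Using $n q = n^c$, i.e.\ $\ell \le \hat\ell$ means roughly $\ell c \le 1-\delta c$, one rewrites $1/(\delta c) = O(\hat\ell/\delta)$ because $\hat\ell \approx (1-\delta c)/c \ge$ something comparable to $1/c$ in the regime where $\delta$ is bounded away from $1$; this gives $\gamma(\ell) = O(\hat\ell/\delta)$. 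For the second assertion, \eqref{eq:ell.def'} gives the stronger per-path saving $(nq)^\ell < n^{1-\delta}$ directly, so each fresh path costs $< n^{-\delta}$ and the same computation yields $1 < n^2 n^{-\delta t}$, i.e.\ $t = O(1/\delta)$.

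The main obstacle I anticipate is bookkeeping the "first path" and the degenerate paths ($v_s = 0$, i.e.\ paths reusing only old vertices but — wait — a path of length $\ell$ with all vertices old but not all edges old still has $e_s \ge 1$, so one must be slightly careful that $e_s \ge 1$ whenever the path is genuinely new, and that a path contributing $v_s = 0$ new vertices but $e_s \ge 1$ new edges only helps us). A cleaner route avoiding this is to not demand $\bigcup P^{(s)} = R$ but instead greedily pick a maximal subcollection of paths that are "new" in the sense $E(P^{(s)}) \not\subseteq \bigcup_{r<s} E(P^{(r)})$; the remaining paths are then determined by the chosen ones up to bounded multiplicity, and one absorbs that multiplicity into the $O(\cdot)$. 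The only real content is the arithmetic translating \eqref{eq:ell.def}/\eqref{eq:ell.def'} and $nq = n^c$ into the claimed $O(\hat\ell/\delta)$ and $O(1/\delta)$; I expect the constant to come out cleanly once one checks that $\hat\ell \ge c_0/c$ for a universal $c_0$ when $\delta$ is, say, at most $1 - \Omega(1)$, handling the edge case $\delta$ near $1$ separately (there $\hat\ell$ may be $0$ or $1$ and the bound $\gamma(\ell) = O(1/\delta)$ from the second part, or a direct argument, suffices).
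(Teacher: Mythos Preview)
Your argument correctly recovers the bound on the number $m$ of paths that each contribute at least one new edge in a greedy enumeration: the inequality $e_s \ge v_s + 1$ (for $e_s \ge 1$), combined with \eqref{eq:ell.def} or \eqref{eq:ell.def'}, does give $m = O(1/(\delta c)) = O(\hat\ell/\delta)$ or $m = O(1/\delta)$ respectively, and this matches the paper's Claim~\ref{cl:cl1}. (Your displayed equation drops the factor $n^{v_1} = n^{\ell-1}$ from the first path, but this is easily repaired and does not affect the outcome.)

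The genuine gap is the last step: you assert that ``the remaining paths are then determined by the chosen ones up to bounded multiplicity,'' but give no argument. This is not automatic---a graph $R$ that is the edge-union of $m$ $(x,y)$-$P_\ell$'s can in principle contain many more than $m$ such paths---and bounding this multiplicity is in fact the heart of the lemma. The paper handles it with two further steps, both using $q$-sparsity again and, crucially, a specific greedy rule (at each stage choose a $K_i$ \emph{minimizing} $|K_i \setminus \cup_{j<i}K_j|$). First (Claim~\ref{cl:cl2}), this minimality forces any unchosen path $K$ to share at least $\ell/8$ edges with some chosen $K_i$: between the first index $j_0$ at which $K$ is half-covered and the first $j_1$ at which it is fully covered, each $e_i \le \ell/2$ (else $K$ would have been chosen instead of $K_i$), and the resulting factors $n^{-1}(nq)^{e_i} < n^{-1/2}$ in the expectation inequality force $j_1 - j_0 \le 3$. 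Second (Claim~\ref{cl:cl3}), for any fixed path $Q_0$, only $O(1)$ $(x,y)$-$P_\ell$'s can share $\ge \ell/8$ edges with $Q_0$: a separate cover-and-expectation argument bounds $\sum_v (d_R(v) - 2)$ over the branch vertices by $O(1)$, and any path in $R$ is determined by its choices at $x$, $y$, and those branch vertices. Neither step is a triviality you can absorb into an $O(\cdot)$; your proposal stops exactly where the substantive work begins.
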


\begin{proof}
For this discussion we fix distinct $x,y \in V(H)$. 
We usually use $K$, often subscripted, for 
$(x,y)$-$P_\ell$'s, and $\gc =\gc(x,y)$ 
for the number of these;  
so we should show $\gamma=O(\hat \ell/\gd)$
if \eqref{eq:ell.def} holds and 
$\gamma=O(1/\gd)$ if we assume
\eqref{eq:ell.def'}.
We will often 
treat $K$'s as sets of \emph{edges}.

Choose $K_0, K_1, \ldots, K_m$ so that
\beq{eq:K_i.minimal} 
e_i:=
|K_i \setminus \left(\cup_{j<i} K_j\right)|
=\min\{|K \setminus \cup_{j <i} K_j|:
K \not\sub \cup_{j <i} K_j\} \quad \forall i \ge 1
\enq
and 
\beq{eq:covering} 
\mbox{$R ~:= ~\bigcup_{i=0}^m K_i\,\,$ 
contains all $\xypl$'s.} 
\enq
We use $R_i$ for the subgraph of $H$ 
consisting of (the edges of) 
$K_i \setminus (\cup_{j < i} K_j)$ and their vertices
(e.g.\ $R_0=K_0$),
and set 
\[
v_i=|V(R_i)\sm V(\cup_{j<i}R_j)|.
\]
Thus 
\beq{viei}
\mbox{$v_0=e_0 +1=\ell +1\,\,$ and 
$\,\,v_i\leq e_i-1 \leq \ell -1\,\,$ for $i\in [m]$}
\enq
(since for $i\in [m]$, $E(R_i)$ consists of 
edge-disjoint paths with ends in $V(\cup_{j<i}R_j)$),
which, with the $q$-sparsity of $H$, gives
\beq{eq:E.hat.R} 
1\le \EE_qX_{R}< n^{v_R}q^{e_R}=n^{\ell+1}q^{\ell}
\prod_{i=1}^m n^{v_i}q^{e_i}
\le 
n^{\ell+1}q^{\ell}\prod_{i=1}^m  [n^{-1}(nq)^{e_i}].
\enq

The lemma will follow from 
Claims~\ref{cl:cl1}-\ref{cl:cl3}; the first 
of these bounds $m$, and others bound the 
number of $(x,y)$-$P_\ell$'s not in $\{K_0\dots K_m\}$.

\begin{claim}\label{cl:cl1}
If \eqref{eq:ell.def} holds then
$m=O(\hat \ell/\gd)$, and if 
\eqref{eq:ell.def'} holds then
$m<2/\gd$.
\end{claim}

\begin{subproof}
For the second part just notice that   
\eqref{eq:ell.def'} bounds the r.h.s.\ of
\eqref{eq:E.hat.R} by 
\[
n^{\ell+1}q^{\ell}(n^{-1}(nq)^{\ell})^m
<n^2\cdot n^{-\gd m}.
\]

The first part will follow from density considerations.
We may rewrite \eqref{eq:ell.def} as 
$q<n^{-(\ell-1+\gd c)/\ell}$, which with the
the second bound in 
\Cref{prop:dense} 
(that is, $m(H)<1/c$ if $q\le n^{-c}$)
gives
\beq{eq:e/v.ub}
\frac{v_R}{e_R} \ge \frac{\ell-1+\gd c}{\ell}.
\enq
But $e_R=\sum e_i$ and, in view of
\eqref{viei}, 
\[
v_R =\sum v_i \,\, 
\le \,\, e_0+1 +\sum_{i=1}^m(e_i-1) 
\,\,  = \,\,  e_R-m+1;
\]
so with \eqref{eq:e/v.ub} we have 
\[
\frac{\ell-1+\gd c}{\ell} \le
\frac{e_R-m+1}{e_R},
\]
which, with $e_R \le \ell(m+1)$
(and a little rearranging),
gives
\[
m\le 2/(\gd c)-1.
\]
The claim follows since 
$n^{c(\hat \ell+1)} \ge n^{1-\gd c}$
(by the maximality of $\hat \ell$)
and $c<1/2$ (by \eqref{q.bound.cycle}) 
give 
$\hat \ell =\gO(1/c)$.
\end{subproof}

\begin{claim}\label{cl:cl2}
If $K\not\in \{K_0\dots K_m\}$,
then there is $i \in [0,m]$ such that $|K \cap K_i|\ge \ell/8$.
\end{claim}

\begin{subproof}
Let $j_0$ and $j_1$ (possibly equal) be minimum 
with $|K \setminus (\cup_{j \le j_0} K_j)| \le \ell/2$ and $K \sub (\cup_{j \le j_1} K_j)$
(with existence given by \eqref{eq:covering}).
Then
$|K \cap (\cup_{j \in [j_0, j_1]} K_j)| >\ell/2$, so 
\beq{eq:exist} 
\text{there is $i \in [j_0, j_1]$ such that $|K \cap K_i| \ge \ell/(2(j_1-j_0+1))$.}
\enq
This immediately gives the claim
if $j_0=j_1$. For $j_0<j_1$, we return to
\eqref{eq:E.hat.R},
observing that 
(justification to follow) \beq{eq:R_i} 
n^{-1}(nq)^{e_i}
\le \left\{\begin{array}{ll}
n^{-1}(nq)^{\ell}
<n^{-\gd c}<1 &
\mbox{for any $i\in [m]$,}\\
n^{-1}(nq)^{\ell/2}
<n^{-1}n^{(1-\gd c)/2}<n^{-1/2}
&\mbox{if $i\in [j_0+1,j_1]$.}
\end{array}\right.
\enq
Here both lines use 
\eqref{eq:ell.def},
and---the main point---the second uses 
\[
e_i\leq \ell/2,
\]
which holds since 
otherwise \eqref{eq:K_i.minimal} would have
forbidden choosing $K_i$ when we could have
chosen $K$.

The $q$-sparsity of $H$, with \eqref{eq:E.hat.R},
\eqref{eq:R_i} and, again, \eqref{eq:ell.def},
now gives
\beq{1EqXR}
1 \le \E_qX_R <  n^{\ell+1}q^\ell \cdot n^{-(j_1-j_0)/2}
<n^{2-(j_1-j_0)/2};
\enq
so $j_1-j_0\leq 3$ and 
\eqref{eq:exist} completes the proof.
\end{subproof}

For the next claim, to avoid confusion, 
we use $Q$ in place of $K$ for 
$\xypl$'s.
\begin{claim}\label{cl:cl3}
For any $Q_0$, the number of  
$Q$'s sharing at least $\ell/8$ edges with $Q_0$
is  $O(1)$.
\end{claim}
\begin{subproof}
Choose $Q_1, \ldots, Q_m$ so that
\[    
\mbox{$|Q_i\cap Q_0|\ge \ell/8\,\,\,$ and 
$\,\,\, Q_i \not\sub \cup_{j <i} Q_j 
\quad \forall i \in [m]$}
\]    
and 
\[   
\mbox{$R ~:= ~\bigcup_{i=0}^m Q_i\,\,$ 
contains all $Q$'s with $Q\cap Q_0\geq \ell/8$.} 
\]   
We again use $R_i$ for the subgraph of $H$ 
consisting of the edges of 
$Q_i \setminus (\cup_{j < i} Q_j)$ and their vertices, and for $i\in [m]$ set
\[
\mbox{$e_i=
|Q_i \setminus \left(\cup_{j<i} Q_j\right)|,\,\,$ 
$\,v_i=|V(R_i)\sm V(\cup_{j<i}R_j)|$,}
\]
and 
\beq{f(i)}
f(i) = |\{(v,e):e\in Q_i\sm (\cup_{j<i}Q_j),
v\in e\cap V(\cup_{j<i}R_j)\}|.
\enq
The main point here is
\beq{f(i)main}
\sum f(i)=O(1).
\enq
(Arguing as for
\Cref{cl:cl2} 
gives $m=O(1)$, but we now need 
a little more.)

Here we observe that, 
for each $i$, $R_i$ is an edge-disjoint union of (say) $a_i$ 
paths, each of which shares precisely its
endpoints with $\cup_{j<i} R_j$. 
Each of these paths contributes (exactly) two 
pairs $(v,e)$ to $f(i)$, and each $(v,e)$
counted by $f(i)$ arises in this way; 
so 
\beq{fiai}
f(i)=2a_i.
\enq

\begin{subproof}[Proof of \eqref{f(i)main}.]
Noting that
\[
\mbox{$v_i=e_i-a_i\,\, $ and $\,\, e_i\le 7\ell/8$,}
\]
we have (more or less as in
\eqref{eq:E.hat.R}-\eqref{1EqXR})
\[   
1 \le \EE_qX_{R} < n^{v_R}q^{e_R}=n^{\ell+1}q^{\ell}
\prod_{i=1}^m n^{v_i}q^{e_i} 
< n^2\prod_{i=1}^m n^{v_i}q^{e_i}
\]   
and 
\[
n^{v_i}q^{e_i} = n^{-a_i}(nq)^{e_i}
\le 
n^{-a_i}n^{7(1-\gd c)/8},
\]
which with \eqref{fiai} easily give \eqref{f(i)main}. 
\end{subproof}

Now, with $v$ running over $V(R)\sm \{x,y\}$ 
(so $d_R(v)\ge 2$), we have
\beq{dRdR}
(d_R(x)+d_R(y)-2) +\sum_v(d_R(v)-2) = 2\sum a_i
\enq
(since if $\gS_i$ is the l.h.s.\ of 
\eqref{dRdR} with $R$ replaced by $\cup_{j\le i}Q_j$,
then $\gS_0=0$ and $\gS_i-\gS_{i-1}= 2a_i$ 
for $i\geq 1$).
Combined with \eqref{f(i)main} (and \eqref{fiai}) this  gives 
\Cref{cl:cl3}, since any 
$\xypl$ in $R$ is determined by 
what it does at $x$, $y$ and 
the $v$'s of degree greater than 2. \end{subproof}

Finally, 
the combination of \Cref{cl:cl2} and \Cref{cl:cl3}
(used with $Q_0=K_i$ for $i\in [0,m]$) 
bounds the number of $(x,y)$-$P_\ell$'s by
$O(m)$; and adding the bounds on $m$ from
\Cref{cl:cl1} then gives \Cref{lem:cycle.key}.
\end{proof}

With \eqref{eq:paths.ub} and 
\Cref{lem:cycle.key} in hand, 
we return to \Cref{thm:clique},
setting (for the rest of our discussion)
\beq{ell0.1}
\dhl = \hat\ell(0.1).
\enq

\nin
To begin, we observe that the 
theorem is easy when $k$ is fairly small
(here we don't need \eqref{eq:paths.ub}):

\begin{lem}\label{lem:small k}
If $k \le \dhl +1$, then $N(H,F)< \EE_pX_F.$
\end{lem}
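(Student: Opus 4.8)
The plan is to run the local-counting strategy sketched after \Cref{prop:nu}. By \eqref{eq:EpXF.lb'} it suffices to show $N(H,F)<L^{0.9k}(nq)^k$. The key point is that any copy of $F=C_k$ sharing an edge with a fixed copy $K$ passes through one of the $k$ edges of $K$, while a copy of $C_k$ through a given edge $uv\in E(H)$ is recovered from the $(u,v)$-$P_{k-1}$ left after deleting $uv$; hence the number of copies of $F$ meeting a fixed copy is at most $k\,\gamma(k-1)$. Feeding this into \eqref{N.nu.B}, then using \Cref{prop:nu} and $\EE_qX_{C_k}=\tfrac{(n)_k}{2k}q^k\le\tfrac{1}{2k}(nq)^k$, I get
\[
N(H,F)\ \le\ \nu(H,F)\cdot k\gamma(k-1)\ \le\ e\,\EE_qX_{C_k}\cdot k\gamma(k-1)\ <\ \tfrac{e}{2}\,(nq)^k\,\gamma(k-1),
\]
so everything reduces to bounding $\gamma(k-1)$. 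Since $\ell:=k-1\le\dhl=\hat\ell(0.1)$ satisfies \eqref{eq:ell.def} with $\gd=0.1$, \Cref{lem:cycle.key} applies.

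From there I would split according to the size of $c(k-1)$ (recall $nq=n^c$). If $c(k-1)\le 1/2$, then $(nq)^{k-1}=n^{c(k-1)}\le n^{1/2}<n^{1-1/4}$, so \eqref{eq:ell.def'} holds with $\gd=1/4$ and \Cref{lem:cycle.key} gives $\gamma(k-1)=O(1)$; hence $N(H,F)=O\big((nq)^k\big)<L^{0.9k}(nq)^k$ once $L$ is large, using $k\ge 3$. If instead $c(k-1)>1/2$, then $k>1/(2c)$; and since the definition of $\hat\ell$ forces $(nq)^{\dhl}<n^{1-0.1c}$, i.e.\ $c\dhl<1-0.1c<1$, we get $\dhl<1/c<2k$, so the first part of \Cref{lem:cycle.key} gives $\gamma(k-1)=O(\dhl/0.1)=O(k)$ and therefore $N(H,F)=O(k)(nq)^k<L^{0.9k}(nq)^k$, again once $L$ is large (now because, for $L$ large, $L^{0.9k}/k$ is increasing in $k$ on $k\ge 3$, so it is at least $L^{2.7}/3$ there). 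In either case \eqref{eq:EpXF.lb'} then yields $N(H,F)<\EE_pX_F$.

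The step I expect to need the most care is checking that cruder bounds are not enough. Bounding $N(H,C_k)$ by $e_H\cdot\gamma(k-1)/k$ — via the correspondence between (copy of $C_k$, edge of it) pairs and (edge of $H$, $(x,y)$-$P_{k-1}$) pairs, together with $e_H=O(n)$ from \Cref{cor:e_H} — is too weak when $q$ is small, since in that regime $\EE_pX_F$ may be only a fixed, possibly sub-linear, power of $n$; the gain from $\nu(H,C_k)\le e\,\EE_qX_{C_k}=O\big((nq)^k/k\big)$ is essential. Likewise, the bound $\gamma(\ell)=O(\hat\ell/\gd)$ of \Cref{lem:cycle.key} by itself does not suffice when $k$ (hence $L^{0.9k}$) is only moderately large; one needs the stronger $\gamma(\ell)=O(1/\gd)$ that holds for $\ell$ comfortably below $\hat\ell$, and arranging access to it is precisely the role of the case split above.
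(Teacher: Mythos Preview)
Your proof is correct and follows essentially the same line as the paper's: bound $N(H,F)$ by $\nu(H,F)\cdot k\,\gamma(k-1)$, apply \Cref{prop:nu}, and reduce to $\gamma(k-1)=O(k)$, which you then extract from the two regimes of \Cref{lem:cycle.key}. The only cosmetic difference is the case split: the paper splits at $k\le \dhl/2$ versus $k\in[\dhl/2,\dhl+1]$, while you split at $c(k-1)\le 1/2$ versus $c(k-1)>1/2$; since $c\dhl$ is close to $1$, these thresholds are essentially the same.
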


\begin{proof} 
We again use the approach sketched following
\Cref{prop:nu}, beginning by observing that
\[
N(H,F)\leq \nu(H,F)\cdot k \cdot \gamma(k-1),
\]
since each of the $k$ edges of 
a given copy of $F$ is contained in
fewer than $\gamma(k-1)$ other copies.
(Recall $\gc$ and $\nu$ were defined in
\eqref{gam.def} and 
following \Cref{cor:e_H}.)

Since $\nu(H,F) \le e\EE_qX_F$ 
(see \Cref{prop:nu}), the lemma will follow 
if we show
\beq{k-1,k}
\gamma(k-1)=O(k),
\enq
since then 
\[
N(H,F)\le e\EE_qX_F\cdot O(k^2)<
L^k\EE_qX_F=\EE_pX_F. 
\]

\begin{subproof}[Proof of \eqref{k-1,k}]
This is two applications of 
\Cref{lem:cycle.key}:
if $k \le \dhl/2$, then
\[
(nq)^{k-1}\le (nq)^{\dhl/2}
\le n^{(1-0.1 c)/2} <n^{1/2},
\]
so the second part of the lemma gives
$\gamma(k-1)=O(1)$; and if 
$k\in [\dhl/2, \dhl+1]$, then
\[
(nq)^{k-1}\le (nq)^{\dhl}<n^{1-0.1 c}
\]
and the first part of the lemma gives 
$\gamma(k-1)=O(\dhl)=O(k)$.
\end{subproof}

\end{proof}

For the rest of this section, we assume 
\beq{eq:kbig}
k\ge \dhl+2,
\enq 
and divide the argument according to
the value of $q\,$
($\in (1/n ,  1/\sqrt n)$; see 
\eqref{q.bound.cycle}).

\mn
\emph{Small $q$.} We first assume
\[    
1/n < q \le \log n/n.
\]   
(The upper bound 
could be considerably relaxed.)

Setting $m=k-\dhl~( \ge 2)$, we have
\[
N(H,F)\le N(H, P_{m})\cdot\gamma(\dhl)
\]
(since $\gamma(\dhl)$ bounds the 
number of completions of any given 
$P_m$ in $H$ to a $C_k$); and
inserting the bounds from 
\eqref{eq:paths.ub} and \Cref{lem:cycle.key}
(namely, 
$N(H,P_{m})< n^{m+1}(L_1q)^{m}$
and $\gamma(\dhl) = O(\dhl)$),
and letting $L= L_1^2$ (and $p=Lq$), gives 
\[
N(H,F) < n^{m+1}(L_1q)^{m}\cdot O(\dhl) 
\leq L^{k/2} n(nq)^{m}\cdot O(\dhl).
\]
To bound the r.h.s.\ of this, 
we first observe that
maximality of $\dhl\,$ ($=\hat\ell(0.1)$) gives
\beq{eq:hat.ell.plus2} (nq)^{\dhl+2}=n^c(nq)^{\dhl+1}\ge 
n^c n^{1-0.1 c}>n.\enq
So 
$n(nq)^m<(nq)^{\dhl+2}(nq)^m=(nq)^{k+2}$,
and $N(H,F)$ is less than 
\[
L^{k/2}(nq)^{k}O((nq)^2\dhl)<L^{.9k}(nq)^k<\EE_pX_F,
\]
where the first inequality uses
the easy 
\beq{kdhlgO}
k> \dhl =\gO(\log n/\log\log n)
\enq
(see \eqref{ell0.1} and 
\eqref{eq:ell.def}; here  
$\dhl >\log\log n$ would suffice),
and the second is \eqref{eq:EpXF.lb'}.
\qed

\mn
\emph{Large $q$.}
Here we are in the complementary range
\[
\log n/n < q < 1/ \sqrt n.
\]
We again use 
\beq{eq:factor}
    N(H,F) \leq N(H,P_m)\cdot\gc(\ell'),
\enq
with a suitable $\ell'$ and $m = k - \ell'$. 
In this case 
we bound the first factor by the 
trivial 
\[
N(H,P_m) \leq 2e_H\gD_H^{m-1}
\]
(cf.\ \eqref{eq:cycle.dense.ub};
curiously this now does better than \eqref{eq:paths.ub}), 
which with 
\Cref{cor:e_H} and \Cref{prop:max.deg} gives
\beq{NHPm}
N(H,P_m) < O(n (2enq)^{k - \ell' - 1}).
\enq
So we will mainly be interested in $\gc(\ell')$.

Let $\ell' $ be the largest integer $\ell$
satisfying 
$ n^{\ell - 1} q^\ell \leq L^{-1/4}$,
noting that 
\beq{ell'lb}
\ell' + 1 > (1 - o(1))\log n/\log(nq)
\enq
(since $n^{\ell'}q^{\ell' + 1} > L^{-1/4}$),
and set
\[
f = f(n) = 1/(n^{\ell'-1}q^{\ell'})
\]
and 
\[
\gdd =\log f/\log (L^{1/4}nq);
\]
noting that
$L^{1/4} \leq f < L^{1/4}nq$ implies
$\gd\in (0,1)$ and
\beq{gd.bds}
\log f/\log(nq)> \gdd
> (1-o(1))\log f/\log(nq)
\enq
(the latter since here $nq=\go(1)$).

We first check that
\Cref{lem:cycle.key}, used with 
$\ell=\ell'$ (and $\gd = \gdd$), gives
\beq{eq:gambound}
\gc(\ell') = O(\log n/ \log f).
\enq

\begin{subproof} 
The upper bound in \eqref{gd.bds} implies
\eqref{eq:ell.def} in the 
form
$(nq)^{\gdd} 
< (n^{\ell' - 1}q^{\ell'} )^{-1} = f$ 
(recall $nq=n^c$); 
so (the first part of)
\Cref{lem:cycle.key} gives $\gc(\ell') = O(\hat\ell(\gdd)/\gdd)$, and
\eqref{eq:gambound} then follows from 
the lower bound in \eqref{gd.bds} and
the trivial
$
\hat \ell (\gdd)= 
O(\log n/\log (nq)).
$   
\end{subproof}

We should also note that 
($m :=$) $k - \ell'\geq 1$,
which is given by \eqref{eq:kbig}
since 
$n^{\dhl+ 1}q^{\dhl + 2} 
\ge n^{-0.1c}nq > 1 > L^{-1/4}$
implies $\ell' \leq \dhl +1$.
We may thus insert
\eqref{NHPm} and 
\eqref{eq:gambound} in \eqref{eq:factor}, yielding
\[
N(H,F)= O(n (2enq)^{k - \ell' - 1}\log n/ \log f),
\]
which, for large enough $L$, is less than
\[(n^kq^k L^{.9k}) \cdot \left(f\log n/(L^{k/2}nq\log f)\right).\]
In view of \eqref{eq:EpXF.lb'}, 
it is thus enough to show 
$(f/\log f)\log n < L^{k/2}nq$, 
which, rewritten as
\[
(f/\log f)(\log n/\log (nq)) < L^{k/2} nq/\log(nq),
\]
is true because
$f/\log f < L^{1/4}nq /\log(nq)$ 
(since $f < L^{1/4}nq$ and
$x/\log x$ is increasing for $x\ge e$)
and, with plenty of room,  
$\log n/\log(nq) < L^{k/4}$ follows from \eqref{ell'lb}.

\section*{Acknowledgments}
QD was supported by NSF Grants DMS-1954035 and DMS-1928930. JK was supported by NSF Grants DMS-1954035 and DMS-2452069.  
JP was supported by NSF Grant DMS-2324978, NSF CAREER Grant DMS-2443706 and a Sloan Fellowship.

\bibliographystyle{plain}
\bibliography{bibliography}

\end{document}